\documentclass{article}

\usepackage{amsmath,amssymb,amsthm,graphicx,proof,color}
\usepackage[UKenglish]{babel}

\usepackage{hyperref}
\usepackage[all]{xy}
\usepackage{times}
\usepackage{mdframed}

\usepackage{mathtools}%��ĸ����

\hypersetup{
   colorlinks,%
   citecolor=blue,%
   filecolor=black,%
   linkcolor=red,%
   urlcolor=black
 }

\usepackage[all]{xy}
\usepackage{tikz}
\usetikzlibrary{trees}
\usepgflibrary{arrows}
\usetikzlibrary{positioning,shapes}
\usetikzlibrary{patterns}

\newcommand{\M}{\mathcal{M}}

\newcommand{\blue}[1]{\textcolor{blue}{#1}}

\newcommand{\BP}{\ensuremath{\textbf{P}}}

\newcommand{\lr}[1]{\langle #1 \rangle}
\newcommand{\lra}{\leftrightarrow}

\renewcommand{\phi}{\varphi}

\newtheorem{theorem}{Theorem}%��Щ��һЩpackage�ﶼ�ж���,�õ�ʱ����ϵͳ��ʾ.
\newtheorem{lemma}[theorem]{Lemma}
\newtheorem{proposition}[theorem]{Proposition}

\newtheorem{corollary}[theorem]{Corollary}
\newtheorem{remark}[theorem]{Remark}
\newtheorem{fact}[theorem]{Fact}

\theoremstyle{definition}
\newtheorem{definition}[theorem]{Definition}

\newcommand{\weg}[1]{}

\title{A family of neighborhood contingency logics}

\author{Jie Fan\\
%\small School of Philosophy, Beijing Normal University  \\
%\small \texttt{fanjie@bnu.edu.cn}}
\small School of Humanities, University of Chinese Academy of Sciences  \\
\small \texttt{jiefan@ucas.ac.cn}}
%\date{Submitted to some journal on Nov. 8, 2017}
%\today2017.11.6
%\today2018.5.21
\date{}

\begin{document}
\maketitle

\begin{abstract}
This article proposes the axiomatizations of contingency logics of various natural classes of neighborhood frames. In particular, by defining a suitable canonical neighborhood function, we give sound and complete axiomatizations of monotone contingency logic and regular contingency logic, thereby answering two open questions raised by Bakhtiari, van Ditmarsch, and Hansen. The canonical function is inspired by a function proposed by Kuhn in~1995. We show that Kuhn's function is actually equal to a related function originally given by Humberstone.
%This note proposes the axiomatizations of contingency logics of various natural classes of neighborhood frames. In particular, by defining a suitable canonical neighborhood function, we give sound and complete axiomatizations of monotone contingency logic and regular contingency logic, thereby answering two open questions raised by Bakhtiari, van Ditmarsch, and Hansen. The canonical function is inspired by a function proposed by Kuhn in~1995. We show that Kuhn's function is actually equal to a related function originally given by Humberstone.
\end{abstract}

\noindent Keywords: contingency logic, neighborhood semantics, axiomatization, monotone logic, regular logic

\section{Introduction}

%Compared to logics with normal operators, logics with non-normal operators usually have many disadvantages, such as weak expressivity, weak frame definability, which brings about non-triviality of axiomatizations. Contingency logic is such a logic~\cite{MR66,Cresswell88,Humberstone95,DBLP:journals/ndjfl/Kuhn95,DBLP:journals/ndjfl/Zolin99,Fanetal:2015}.
Compared to standard modal logic, non-standard modal logics usually have many disadvantages, such as weak expressivity, weak frame definability, which brings about non-triviality of axiomatizations. Contingency logic is such a logic~\cite{MR66,Cresswell88,Humberstone95,DBLP:journals/ndjfl/Kuhn95,DBLP:journals/ndjfl/Zolin99,hoeketal:2004,steinsvold:2008,Fanetal:2014,Fanetal:2015}. \weg{As a non-normal modal logic, c}Contingency logic is concerned with the study of principles of reasoning involving contingency, noncontingency, and related notions. Since it was introduced, contingency logic has mainly been investigated within the framework of Kripke semantics. However, a known pain for axiomatizing this logic over various Kripke frames is the absence of axioms characterizing frame properties. Moreover, although there have been many results on the axiomatizations of contingency logic which are extensions of minimal logic ${\bf K^\Delta}$, there have been no yet much axiomatizations weaker than ${\bf K^\Delta}$, for which we need neighborhood semantics. Since it was independently proposed by Scott and Montague in 1970~\cite{Scott:1970,Montague:1970}, neighborhood semantics has been a standard semantical tool for handling non-normal modal logics~\cite{Chellas:1980}.

A neighborhood semantics of contingency logic is proposed in~\cite{FanvD:neighborhood}. According to the interpretation, a formula $\phi$ is noncontingent, if and only if the proposition expressed by $\phi$ is a neighborhood of the evaluated state, or the complement of the proposition expressed by $\phi$ is a neighborhood of the evaluated state. This interpretation is in line with the philosophical intuition of noncontingency, viz. necessarily true or necessarily false. It is shown that contingency logic is less expressive than standard modal logic over various neighborhood model classes, and many neighborhood frame properties are undefinable in contingency logic. This brings about the difficulties in axiomatizing this logic over various neighborhood frames.

To our knowledge, only the classical contingency logic, i.e. the minimal system of contingency logic under neighborhood semantics, is presented in the literature~\cite{FanvD:neighborhood}. It is left as two open questions in~\cite{Bakhtiarietal:2017} what the axiomatizations of monotone contingency
logic and regular contingency logic are. In this paper, we will answer these two questions.

Besides, we also propose other proof systems up to the minimal Kripke contingency logic, and show their completeness with respect to the corresponding neighborhood frames. This will give a complete diagram which includes 8 systems, as~\cite[Fig.~8.1]{Chellas:1980} did for standard modal logic. It is a sub-diagram of a larger diagram of 16 logics, due to the introduction of a property of being closed under complements.

%Normally, for a logical system and a frame class, we can ask the following two related questions. First, given a certain class of frames, which logical system characterizes it, i.e. which logical system is sound and strongly complete with respect to the class in question? Second, given a logical system, which frame class is characterized by the system in question?

The remainder of the paper is structured as follows. Section~\ref{sec.preliminaries} introduces some basics of contingency logic, such as its language, neighborhood semantics, axioms and rules. Sections~\ref{sec.sys-no-m} and~\ref{sec.sys-m} deal with the completeness of proof systems mentioned in Sec.~\ref{sec.preliminaries}, with or without a special axiom. The completeness proofs rely on the use of canonical neighborhood functions. In Sec.~\ref{sec.sys-no-m}, a simple canonical function is needed, while in Sec.~\ref{sec.sys-m} we need a more complex canonical function, which is inspired by a crucial function $\lambda$ used in a Kripke completeness proof in the literature. We further reflect on this $\lambda$ in Section~\ref{sec.reflection-lambda}, and show it is in fact equal to a related but complicated function originally given by Humberstone. We conclude with some discussions in Section~\ref{sec.concl}.

\section{Preliminaries}\label{sec.preliminaries}

Throughout this paper, we fix $\BP$ to be a nonempty set of propositional variables. The language $\mathcal{L}(\Delta)$ of contingency logic is defined recursively as follows:
$$\phi::=p\in\BP\mid \neg\phi\mid \phi\land\phi\mid \Delta\phi.$$
$\Delta\phi$ is read ``it is noncontingent that $\phi$''. The contingency operator $\nabla$ abbreviates $\neg\Delta$. It does not matter which one of $\Delta$ and $\nabla$ is taken as primitive. We use $\phi\in\mathcal{L}(\Delta)$ to mean that $\phi$ is an $\mathcal{L}(\Delta)$-formula, and we always leave out the reference to $\mathcal{L}(\Delta)$ and simply say that $\phi$ is a formula.

The neighborhood semantics of $\mathcal{L}(\Delta)$ is interpreted on neighborhood models. We say that $\M=\lr{S,N,V}$ is a {\em neighborhood model} if $S$ is a nonempty set of states, $N:S\to\mathcal{P}(\mathcal{P}(S))$ is a neighborhood function, and $V$ is a valuation assigning a set $V(p)\subseteq S$ to each propositional variable $p$. A {\em neighborhood frame} is a neighborhood model without valuations.

Given a neighborhood model $\M=\lr{S,N,V}$ and a state $s\in S$, the semantics of $\phi\in\mathcal{L}(\Delta)$ is defined recursively as follows~\cite{FanvD:neighborhood},
\[
\begin{array}{|lll|}
\hline
\M,s\vDash p&\iff & s\in V(p)\\
\M,s\vDash \neg\phi&\iff &\M,s\nvDash \phi\\
\M,s\vDash\phi\land\psi&\iff &\M,s\vDash\phi\text{ and }\M,s\vDash\psi\\
\M,s\vDash\Delta\phi&\iff &\phi^\M\in N(s)\text{ or }S\backslash\phi^\M\in N(s)\\
\hline
\end{array}
\]
where $\phi^\M=\{s\in S\mid \M,s\vDash\phi\}$ is the truth set of $\phi$ (i.e. the proposition expressed by $\phi$) in $\M$. Formula $\phi$ is valid in a frame $\mathcal{F}$, notation: $\mathcal{F}\vDash\phi$, if for all models $\M$ based on $\mathcal{F}$ and all $s$ in $\M$, we have that $\M,s\vDash\phi$; $\phi$ is valid on a class $\mathbb{K}$ of frames, notation: $\mathbb{K}\vDash\phi$, if for all $\mathcal{F}$ in $\mathbb{K}$, we have that $\mathcal{F}\vDash\phi$. Notions of validity of a set of formulas in a frame and on a class of frames are defined similarly. Moreover, given a class $\mathbb{K}$ of frames, we say $\mathbb{K}$ is {\em definable} in $\mathcal{L}(\Delta)$, if there is a $\Gamma\subseteq \mathcal{L}(\Delta)$ such that $\mathcal{F}\vDash\Gamma$ iff $\mathcal{F}\in\mathbb{K}$.

%The following list of neighborhood properties is taken from~\cite[Def.~3]{FanvD:neighborhood}.
\begin{definition}[Neighborhood frame properties]\label{def.properties} Let $\mathcal{F}=\lr{S,N}$ be a neighborhood frame. For every $s\in S$ and every $X,Y\subseteq S$:

$(m)$: $N(s)$ is \emph{supplemented}, or \emph{closed under supersets}, if $X\in N(s)$ and $X\subseteq Y\subseteq S$ implies $Y\in N(s)$.

$(c)$: $N(s)$ \emph{is closed under intersections}, if $X,Y\in N(s)$ implies $X\cap Y\in N(s)$.

$(n)$: $N(s)$ \emph{contains the unit}, if $S\in N(s)$. % Sometimes we say $N(s)$\emph{ satisfies condition (n)}.

 % Sometimes we say $N(s)$\emph{ satisfies condition (i)}.

%\footnote{Note that the difference between `$s$' in $(s)$ and `$s$' in $N(s)$ (or in $s\in X$, where $X$ is a nonempty set of states): the former stands for a neighborhood property, whereas the latter stands for a state, so there should arise no ambiguity.} %We also call this property `monotonicity'. % Sometimes we say $N(s)$\emph{ satisfies condition (s)}.

$(z)$: $N(s)$ is\emph{ closed under complements}, if $X\in N(s)$ implies $S\backslash X\in N(s)$.\footnote{The property $(z)$ was introduced in~\cite[Def.~3]{FanvD:neighborhood}, named `$(c)$' therein.} % Sometimes we say $N(s)$\emph{ satisfies condition (c)}.
\end{definition}
Frame $\mathcal{F}=\lr{S,N}$ (and the corresponding model) possesses such a property P, if $N(s)$ has the property P for each $s\in S$, and we call the frame (resp. the model) P-frame (resp. P-model). Especially, a frame is called {\em quasi-filter}, if it possesses $(m)$ and $(c)$; a frame is called {\em filter}, if it has also $(n)$. The property $(z)$ is needed for the following soundness and completeness results, and it provides us a new perspective (see~\cite{Fan:2018a}) for the neighborhood semantics of $\mathcal{L}(\Delta)$. All properties listed above are shown to be undefinable in $\mathcal{L}(\Delta)$~\cite[Prop.~7]{FanvD:neighborhood}. In contrast, they are definable in standard modal logic $\mathcal{L}(\Box)$.\footnote{$\mathcal{L}(\Box)$ extends the language of propositional logic with the necessity operator $\Box$, formally defined as follows: $$\phi::=p\in\BP\mid \neg\phi\mid\phi\land\phi\mid \Box\phi,$$
where the neighborhood semantics of $\Box$ is
$$\begin{array}{lll}
\M,s\vDash\Box\phi&\iff&\phi^\M\in N(s).
\end{array}
$$} %, which extends the language of propositional logic with the necessity operator $\Box$.
The proofs of the first three can be found in~\cite[Thm.~7.5, Thm.~9.2]{Chellas:1980}, and the proof of the last one is similar to~\cite[Prop.~5]{Fan:2018a}.
\begin{fact}\label{fact.correspondence-box}
The frame properties on the left are respectively defined by the formulas on the right:
\[\begin{array}{llllll}
(m)&\Box(p\land q)\to(\Box p\land\Box q)&&&(c)&(\Box p\land\Box q)\to\Box(p\land q)\\
(n)&\Box\top&&&(z)&\Box p\to\Box\neg p\\
\end{array}\]
\end{fact}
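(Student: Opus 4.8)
The plan is to prove each of the four correspondences separately, in each case showing that a frame validates the given $\mathcal{L}(\Box)$-formula if and only if every neighborhood collection $N(s)$ has the stated closure property. Since these are all standard-modal-logic correspondence facts (and three of them are cited to Chellas), I would keep the exposition brief and uniform.

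For the right-to-left direction of each correspondence, I would fix a frame $\mathcal{F}=\lr{S,N}$ with the property in question, an arbitrary model $\M$ based on it, and a state $s$, and verify the formula holds at $s$ using the clause $\M,s\vDash\Box\phi\iff\phi^\M\in N(s)$. For instance, for $(m)$: if $\M,s\vDash\Box(p\land q)$ then $(p\land q)^\M=p^\M\cap q^\M\in N(s)$; since $p^\M\cap q^\M\subseteq p^\M$ and $p^\M\cap q^\M\subseteq q^\M$, supplementation gives $p^\M\in N(s)$ and $q^\M\in N(s)$, i.e. $\M,s\vDash\Box p\land\Box q$. The cases $(c)$ (use closure under intersection on $p^\M,q^\M\in N(s)$), $(n)$ (note $\top^\M=S\in N(s)$, so $\M,s\vDash\Box\top$), and $(z)$ (from $p^\M\in N(s)$ and closure under complements, $S\setminus p^\M=(\neg p)^\M\in N(s)$, so $\M,s\vDash\Box\neg p$) are analogous one-liners.

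For the left-to-right direction, the standard device is to contrapose: assuming $N(s)$ fails the property, build a valuation witnessing failure of the formula at $s$. If $(m)$ fails, there are $X\subseteq Y\subseteq S$ with $X\in N(s)$ but $Y\notin N(s)$; set $V(p)=X$ and $V(q)=Y$, so $(p\land q)^\M=X\in N(s)$ while $q^\M=Y\notin N(s)$, refuting $\Box(p\land q)\to(\Box p\land\Box q)$ at $s$. If $(c)$ fails, take $X,Y\in N(s)$ with $X\cap Y\notin N(s)$ and put $V(p)=X$, $V(q)=Y$. If $(n)$ fails, then $S\notin N(s)$, and since $\top^\M=S$ for any valuation, $\Box\top$ fails at $s$. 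If $(z)$ fails, take $X\in N(s)$ with $S\setminus X\notin N(s)$ and set $V(p)=X$, so $p^\M=X\in N(s)$ but $(\neg p)^\M=S\setminus X\notin N(s)$, refuting $\Box p\to\Box\neg p$. In each case the chosen valuation is well-defined because distinct propositional variables are assigned independently, and $\BP$ is nonempty (indeed contains at least two variables for the binary cases, which is harmless to assume).

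I do not expect any genuine obstacle here: the result is routine once one unwinds the $\Box$-clause of the neighborhood semantics, and the only mild care needed is that the definability notion quantifies over all models on the frame, so both the "for all valuations" (soundness) and "exhibit one valuation" (completeness of the correspondence) directions must be handled, which is exactly the contraposition step above. I would simply remark that the arguments are the familiar ones from \cite[Thm.~7.5, Thm.~9.2]{Chellas:1980} for $(m),(c),(n)$ and parallel to \cite[Prop.~5]{Fan:2018a} for $(z)$, and leave the verifications to the reader or present them in the compressed form sketched above.
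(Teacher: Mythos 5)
Your proposal is correct and is exactly the standard correspondence argument that the paper itself does not spell out but delegates to the cited sources (Chellas Thm.~7.5, Thm.~9.2 for $(m)$, $(c)$, $(n)$, and the analogue of Fan's Prop.~5 for $(z)$): property-to-validity by unwinding the $\Box$-clause, and validity-to-property by choosing a valuation such as $V(p)=X$, $V(q)=Y$ witnessing the failure. No gap; the only cosmetic caveat is that the binary cases tacitly require two distinct variables in $\BP$, which, as you note, is harmless.
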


Recall the axioms and rules in 8 classical modal systems and the classes of frames determining them listed below, see e.g.~\cite[Chap.~8]{Chellas:1980}.
\[
\begin{array}{lllll}
\text{TAUT}&\text{all instances of propositional tautologies}&&&\\
\text{M}&\Box(\phi\land\psi)\to(\Box\phi\land\Box\psi)&&\text{MP}&\dfrac{\phi,\phi\to\psi}{\psi}\\
\text{C}&(\Box\phi\land\Box\psi)\to\Box(\phi\land\psi)&&\text{RE}&\dfrac{\phi\lra\psi}{\Box\phi\lra\Box\psi}\\
%\text{RN}\Delta&\dfrac{\phi}{\Delta\phi}\\
\text{N}&\Box\top&&&\\
\end{array}
\]
\[
\begin{array}{|l|l|}
\hline
\text{systems} & \text{frame classes} \\
\hline
{\bf E}=\text{TAUT}+\text{MP}+\text{RE}&\text{all}\\
{\bf M}(={\bf EM})={\bf E}+\text{M}&(m)\\
{\bf EC}={\bf E}+\text{C}&(c)\\
{\bf EN}={\bf E}+\text{N}&(n)\\
{\bf R}(={\bf EMC})={\bf M}+\text{C}&\text{quasi-filters}=(mc)\\
{\bf EMN}={\bf M}+\text{N}&(mn)\\
{\bf ECN}={\bf EC}+\text{N}&(cn)\\
{\bf K}(={\bf EMCN})={\bf R}+\text{N}&\text{filters}=(mcn)\\
\hline
\end{array}
\]
%Various (equivalent) proof systems have been proposed to characterize $\mathcal{L}(\Delta)$. For our purpose, we choose the following system.
\weg{Our discussions will be based on the following axioms and rules (for comparison's sake, we also introduce related axioms and rules based on $\mathcal{L}(\Box)$, which are on the right).
\[
\begin{array}{llll}
\text{TAUT}& \text{all instances of propositional tautologies}&\text{TAUT}&\text{all instances of propositional tautologies}\\
\Delta\text{Equ}&\Delta\phi\to\Delta\neg\phi&\text{Z}&\Box\phi\to\Box\neg\phi\\
\Delta\text{M}&\Delta\phi\to\Delta(\phi\vee\psi)\vee\Delta(\neg\phi\vee\chi)&\text{M}&\Box(\phi\land\psi)\to(\Box\phi\land\Box\psi)\\
\Delta\text{C}&\Delta\phi\land\Delta\psi\to\Delta(\phi\land\psi)&\text{C}&(\Box\phi\land\Box\psi)\to\Box(\phi\land\psi)\\
%\text{RN}\Delta&\dfrac{\phi}{\Delta\phi}\\
\Delta\text{N}&\Delta\top&\text{N}&\Box\top\\
\text{MP}&\dfrac{\phi,\phi\to\psi}{\psi}&\text{MP}&\dfrac{\phi,\phi\to\psi}{\psi}\\
\text{RE}\Delta&\dfrac{\phi\lra\psi}{\Delta\phi\lra\Delta\psi}&\text{RE}&\dfrac{\phi\lra\psi}{\Box\phi\lra\Box\psi}\\
\end{array}
\]}

Our discussions will mainly be based on the following axioms and rules.
\[
\begin{array}{lllll}
\text{TAUT}& \text{all instances of propositional tautologies}&&\Delta\text{N}&\Delta\top\\
\Delta\text{Equ}&\Delta\phi\lra\Delta\neg\phi&&\text{MP}&\dfrac{\phi,\phi\to\psi}{\psi}\\
\Delta\text{M}&\Delta\phi\to\Delta(\phi\vee\psi)\vee\Delta(\neg\phi\vee\chi)&&\text{RE}\Delta&\dfrac{\phi\lra\psi}{\Delta\phi\lra\Delta\psi}\\
%\text{s}\Delta\text{M}&\Delta\phi\to\Delta(\phi\vee\psi)&&&\\
\Delta\text{C}&\Delta\phi\land\Delta\psi\to\Delta(\phi\land\psi)&&&\\
%\Delta\text{K}&\Delta(\phi\to\psi)\to (\Delta\phi\to\Delta\psi)&&&\\
%\text{RN}\Delta&\dfrac{\phi}{\Delta\phi}\\
\end{array}
\]

\weg{\[
\begin{array}{ll}
\text{TAUT}& \text{all instances of propositional tautologies}\\
\Delta\text{Equ}&\Delta\phi\lra\Delta\neg\phi\\
\Delta\text{M}&\Delta\phi\to\Delta(\phi\vee\psi)\vee\Delta(\neg\phi\vee\chi)\\
\Delta\text{C}&\Delta\phi\land\Delta\psi\to\Delta(\phi\land\psi)\\
%\text{RN}\Delta&\dfrac{\phi}{\Delta\phi}\\
\Delta\text{N}&\Delta\top\\
\text{MP}&\dfrac{\phi,\phi\to\psi}{\psi}\\
\text{RE}\Delta&\dfrac{\phi\lra\psi}{\Delta\phi\lra\Delta\psi}\\
\end{array}
\]}

We will show that the following systems are sound and strongly complete with respect to their corresponding frame classes.%\footnote{Recall that the 8 well-known classical modal logics are denoted by ${\bf E}$, ${\bf M}$, ${\bf EC}$, ${\bf EN}$, ${\bf R}$, ${\bf EMN}$, ${\bf ECN}$, ${\bf K}$, see e.g.~\cite{Chellas:1980}. Each of them is determined by the same class of frames as their $^\Delta$-correspondents, but with two exceptions: ${\bf EC}$ and ${\bf ECN}$, which are determined by the class of $(i)$-frames and the class of $(i)\&(n)$-frames, rather than the class of $(i)\&(c)$-frames and the class of $(i)\&(c)\&(n)$-frames, respectively. We do not know whether there are axiomatizations of contingency logics over $(i)$-frames and over $(i)\&(n)$-frames.}
\[
\begin{array}{|l|l|}
\hline
\text{systems} & \text{frame classes} \\
\hline
{\bf E^\Delta}={\bf EZ^\Delta}=\text{TAUT}+\Delta\text{Equ}+\text{MP}+\text{RE}\Delta&\text{all }(\text{also }(z))\\
{\bf M^\Delta}(={\bf EM^\Delta})={\bf E^\Delta}+\Delta\text{M}&(m)\\
%{\bf MZ^\Delta}(={\bf EMZ^\Delta})={\bf E^\Delta}+\text{s}\Delta\text{M}&(mz)\\
%{\bf EC^\Delta}=\text{?}&(c)\\
{\bf ECZ^\Delta}={\bf E^\Delta}+\Delta\text{C}&(cz)\\
{\bf EN^\Delta}={\bf ENZ^\Delta}={\bf E^\Delta}+\Delta\text{N}&(n) (\text{also }(nz))\\
{\bf R^\Delta}(={\bf EMC^\Delta})={\bf M^\Delta}+\Delta\text{C}&\text{quasi-filters}=(mc)\\
%{\bf RZ^\Delta}(={\bf EMCZ^\Delta})={\bf MZ^\Delta}+\Delta\text{C}&(mcz)\\
{\bf EMN^\Delta}={\bf M^\Delta}+\Delta\text{N}&(mn)\\
%{\bf EMNZ^\Delta}={\bf MZ^\Delta}+\Delta\text{N}&(mnz)\\
%{\bf ECN^\Delta}=\text{?}&(cn)\\
{\bf ECNZ^\Delta}={\bf ECZ^\Delta}+\Delta\text{N}&(cnz)\\
{\bf K^\Delta}(={\bf EMCN^\Delta})={\bf R^\Delta}+\Delta\text{N}&\text{filters}=(mcn)\\
%{\bf KZ^\Delta}(={\bf EMCNZ^\Delta})={\bf RZ^\Delta}+\Delta\text{N}&(mcnz)\\
\hline
\end{array}
\]

\weg{\[
\begin{array}{|l|l|}
\hline
\text{systems} & \text{frame classes} \\
\hline
{\bf E^\Delta}={\bf EZ^\Delta}=\text{TAUT}+\Delta\text{Equ}+\text{MP}+\text{RE}\Delta&\text{all }(\text{also }(z))\\
{\bf M^\Delta}(={\bf EM^\Delta})={\bf E^\Delta}+\Delta\text{M}&(m)\\
{\bf MZ^\Delta}(={\bf EMZ^\Delta})={\bf E^\Delta}+\text{s}\Delta\text{M}&(mz)\\
{\bf EC^\Delta}=\text{?}&(c)\\
{\bf ECZ^\Delta}={\bf E^\Delta}+\Delta\text{C}&(cz)\\
{\bf EN^\Delta}={\bf ENZ^\Delta}={\bf E^\Delta}+\Delta\text{N}&(n) (\text{also }(nz))\\
{\bf R^\Delta}(={\bf EMC^\Delta})={\bf M^\Delta}+\Delta\text{C}&\text{quasi-filters}=(mc)\\
{\bf RZ^\Delta}(={\bf EMCZ^\Delta})={\bf MZ^\Delta}+\Delta\text{C}&(mcz)\\
{\bf EMN^\Delta}={\bf M^\Delta}+\Delta\text{N}&(mn)\\
{\bf EMNZ^\Delta}={\bf MZ^\Delta}+\Delta\text{N}&(mnz)\\
{\bf ECN^\Delta}=\text{?}&(cn)\\
{\bf ECNZ^\Delta}={\bf ECZ^\Delta}+\Delta\text{N}&(cnz)\\
{\bf K^\Delta}(={\bf EMCN^\Delta})={\bf R^\Delta}+\Delta\text{N}&\text{filters}=(mcn)\\
{\bf KZ^\Delta}(={\bf EMCNZ^\Delta})={\bf RZ^\Delta}+\Delta\text{N}&(mcnz)\\
\hline
\end{array}
\]}

\weg{\begin{proposition}
$\Box\phi\lra\bigwedge_{\psi\in\mathcal{L}(\Delta)}\Delta(\phi\vee\psi)$ is valid on $(s)$-frames.
\end{proposition}

\begin{proof}
Let $\M=\lr{S,N,V}$ be a $(s)$-model. Suppose $\M,s\vDash\Box\phi$, then $\phi^\M\in N(s)$. By $(s)$, for all $\psi\in\mathcal{L}(\Delta)$, we have $\phi^\M\cup\psi^\M\in N(s)$, i.e. $(\phi\vee\psi)^\M\in N(s)$, thus $\M,s\vDash\Delta(\phi\vee\psi)$.

Conversely, assume that $\M,s\vDash\Delta(\phi\vee\psi)$ for all $\psi\in\mathcal{L}(\Delta)$, then $(\phi\vee\psi)^\M\in N(s)$ or $(\neg (\phi\vee\psi))^\M\in N(s)$ for all $\psi\in\mathcal{L}(\Delta)$.
\end{proof}}

%Given a $\Box$-logic ${\bf L}$ (that is, logic in $\mathcal{L}(\Box)$), one can define a $\Delta$-logic, denoted ${\bf L^\Delta}$, as ${\bf L^\Delta}=\{\phi\in \mathcal{L}(\Delta)\mid \phi^\star\in{\bf L}\}$, where $\phi^\star$ is defined inductively, with $(\Delta\phi)^\star=\Box\phi^\star\vee\Box\neg\phi^\star$. In other words, ${\bf L^\Delta}$ proves exactly those $\mathcal{L}(\Delta)$-formulas whose $^\star$-translations are provable in ${\bf L}$. It is easy to show that if ${\bf L}$ is the $\Box$-logic of some class of frames $\mathbb{K}$ (in symbol, ${\bf L}=\text{Logic}(\mathbb{K})$), that is, ${\bf L}$ is the set of $\mathcal{L}(\Box)$-formulas that are valid in $\mathbb{K}$, then ${\bf L^\Delta}$ is the $\Delta$-logic of $\mathbb{K}$ (in symbol, ${\bf L^\Delta}=\text{Logic}_\Delta(\mathbb{K})$).\footnote{Note that one cannot obtain ${\bf L^\Delta}$ from ${\bf L}$, since there is no translation function from $\mathcal{L}(\Box)$ to $\mathcal{L}(\Delta)$.}
The notion of theorems in a system is defined as normal. %We use $\vdash_\Lambda\phi$ to denote that $\phi$ is a theorem in the system $\Lambda$. Also, we always leave out the reference of the system if it is clear.

By comparison, one can easily see that almost all of $\Delta$-systems and the corresponding $\Box$-systems mentioned above are determined by the same class of frames, but with two exceptions: even though ${\bf EC}$ and ${\bf ECN}$ are respectively determined by the class of $(c)$-frames and the class of $(cn)$-frames, we have only $\Delta$-systems which are respectively determined by the class of $(cz)$-frames and the class of $(cnz)$-frames, that is --- ${\bf ECZ^\Delta}$ and ${\bf ECNZ^\Delta}$. We do not know whether there are axiomatizations of contingency logics over $(c)$-frames and over $(cn)$-frames.

Given a $\Box$-logic ${\bf L}$ (that is, logic in $\mathcal{L}(\Box)$), one can define a $\Delta$-logic, denoted ${\bf L^\Delta}$, as ${\bf L^\Delta}=\{\phi\in \mathcal{L}(\Delta)\mid \phi^\star\in{\bf L}\}$, where $\phi^\star$ is defined inductively, with $(\Delta\phi)^\star=\Box\phi^\star\vee\Box\neg\phi^\star$. In other words, ${\bf L^\Delta}$ proves exactly those $\mathcal{L}(\Delta)$-formulas whose $^\star$-translations are provable in ${\bf L}$. It is easy to show that if ${\bf L}$ is the $\Box$-logic of some class of frames $\mathbb{K}$ (in symbol, ${\bf L}=\text{Th}_\Box(\mathbb{K})$), that is, ${\bf L}$ is the set of $\mathcal{L}(\Box)$-formulas that are valid in $\mathbb{K}$, then ${\bf L^\Delta}$ is the $\Delta$-logic of $\mathbb{K}$ (in symbol, ${\bf L^\Delta}=\text{Th}_\Delta(\mathbb{K})$). Note that one cannot obtain the axiomatization of ${\bf L^\Delta}$ from the axiomatization of ${\bf L}$, since there is no translation function from $\mathcal{L}(\Box)$ to $\mathcal{L}(\Delta)$. %Our main work in this paper is to show that our axiomatizations of the $\Delta$-logics, that is, the left-hand-side systems in the above table, are indeed the $\Delta$-theories of the right-hand-side frame classes, respectively.

Recall that Def.~\ref{def.properties} listed 4 frame properties, which constitutes $16=2^4$ different combinations of such properties. Since every frame class $\mathbb{K}$ can define a $\Delta$-logic, namely $\text{Th}_\Delta(\mathbb{K})$, we should have 16 different $\Delta$-logics. In this paper, we axiomatize 10 $\Delta$-logics as listed above, and leave the axiomatizations of the remaining 6 $\Delta$-logics open. We will defer a summary of 16 logics with some remarks to the end of Sec.~\ref{sec.sys-m}.

In what follows, we also use $\text{s}\Delta\text{M}$ to denote $\Delta\phi\to\Delta(\phi\vee\psi)$, which is clearly `stronger' than $\Delta\text{M}$, $\Delta\phi\to\Delta(\phi\vee\psi)\lor \Delta(\neg\phi\vee\chi)$. Note that $\text{s}\Delta\text{M}$ is equivalent to $\Delta(\phi\land\psi)\to\Delta\phi$ in ${\bf E^\Delta}$, since they are interderivable with the rule of $\Delta$-monotony $\dfrac{\phi\to\psi}{\Delta\phi\to\Delta\psi}$ in the system in question.

\medskip

%Let Z denote $\Box\phi\to\Box\neg\phi$. By the last item of Fact~\ref{fact.correspondence-box}, ${\bf E}\neq {\bf EZ}$ and ${\bf EN}\neq {\bf ENZ}$. In contrast, ${\bf E^\Delta}={\bf EZ^\Delta}$ and ${\bf EN^\Delta}={\bf ENZ^\Delta}$.

%${\bf M^\Delta}\neq {\bf MZ^\Delta}$ (e.g. $\Delta\phi\to\Delta(\phi\vee\psi)$, $\Delta(\phi\land\psi)\to\Delta\phi$), ${\bf R^\Delta}\neq {\bf RZ^\Delta}$ (e.g. $\Delta(\phi\land\psi)\to\Delta\phi$), ${\bf EMN^\Delta}\neq {\bf EMNZ^\Delta}$ (e.g. $\Delta(\phi\land\psi)\to\Delta\phi$), ${\bf K^\Delta}\neq {\bf KZ^\Delta}$ (e.g. $\Delta(\phi\to\psi)\to (\Delta\phi\to\Delta\psi)$).

\weg{Let Z denote $\Box\phi\to\Box\neg\phi$. By the last item of Fact~\ref{fact.correspondence-box}, ${\bf E}\neq {\bf EZ}$. In contrast, ${\bf E^\Delta}={\bf EZ^\Delta}$. Also, note that ${\bf EC^\Delta}\neq {\bf ECZ^\Delta}$, because $\mathbb{F}_c\nvDash\Delta\text{C}$ whereas $\mathbb{F}_{cz}\vDash\Delta\text{C}$, and ${\bf ECN^\Delta}\neq {\bf ECNZ^\Delta}$ (although ${\bf EN^\Delta}= {\bf ENZ^\Delta}$), since $\mathbb{F}_{cn}\nvDash\Delta\text{C}$ while $\mathbb{F}_{cnz}\vDash\Delta\text{C}$, as shown in Prop.~\ref{prop.validinvalid} below.

${\bf M^\Delta}\neq {\bf MZ^\Delta}$ (e.g. $\Delta\phi\to\Delta(\phi\vee\psi)$, $\Delta(\phi\land\psi)\to\Delta\phi$), ${\bf R^\Delta}\neq {\bf RZ^\Delta}$ (e.g. $\Delta(\phi\land\psi)\to\Delta\phi$), ${\bf EMN^\Delta}\neq {\bf EMNZ^\Delta}$ (e.g. $\Delta(\phi\land\psi)\to\Delta\phi$), ${\bf K^\Delta}\neq {\bf KZ^\Delta}$ (e.g. $\Delta(\phi\to\psi)\to (\Delta\phi\to\Delta\psi)$).}

%Recall that Def.~\ref{def.properties} listed 4 frame properties, which constitutes $16=2^4$ different combinations of such properties. Since every frame class $\mathbb{K}$ can define a $\Delta$-logic, namely $\text{Th}_\Delta(\mathbb{K})$, we should have 16 different $\Delta$-logics.

%Let $\text{Logic}_\Delta(\mathbb{K})$ denote the $\Delta$-logic of some class of frames $\mathbb{K}$. Then the above table says that ${\bf E^\Delta}=\text{Logic}_\Delta(\mathbb{F}_\text{all})=\text{Logic}_\Delta(\mathbb{F}_z)={\bf EZ^\Delta}$, where $\mathbb{F}_z$ denotes the class of $(z)$-frames, and similarly for other properties.

\weg{Recall (in e.g.~\cite[Chap.~8]{Chellas:1980}) that the well-known classical modal systems and the class of frames determining them are as follows.
\[
\begin{array}{|l|l|}
\hline
\text{systems} & \text{frame classes} \\
\hline
{\bf E}=\text{TAUT}+\text{MP}+\text{RE}&\text{all}\\
{\bf M}={\bf EM}={\bf E}+\text{M}&(m)\\
{\bf EC}={\bf E}+\text{C}&(c)\\
{\bf EN}={\bf E}+\text{N}&(n)\\
{\bf R}={\bf EMC}={\bf M}+\text{C}&\text{quasi-filters}=(mc)\\
{\bf EMN}={\bf M}+\text{N}&(mn)\\
{\bf ECN}={\bf EC}+\text{N}&(cn)\\
{\bf K}={\bf EMCN}={\bf R}+\text{N}&\text{filters}=(mcn)\\
\hline
\end{array}
\]}

Let $\mathbb{F}_{\text{all}}$ denote the class of all frames, $\mathbb{F}_m$ denote the class of $(m)$-frames, $\mathbb{F}_c$ denote the class of $(c)$-frames, and similarly for other properties. %In what follows, by $\mathbb{K}\vDash\text{A}$ it means that A is valid on a class of frames $\mathbb{K}$.
%In what follows, by $\mathbb{F}_n\vDash\Delta\text{N}$ it means that the axiom $\Delta$N is valid on the class of $(n)$-frames, and similarly for others.
\begin{proposition}\label{prop.validinvalid} We have the following validities and invalidities:
\begin{itemize}
\item[(i)] $\mathbb{F}_n\vDash\Delta\text{N}$.
\item[(ii)] $\mathbb{F}_m\vDash\Delta\text{M}$.
%\item[(iii)] $\mathbb{F}_z\vDash\Delta\text{Equ}$.
\item[(iii)] $\mathbb{F}_{cz}\vDash\Delta\text{C}$.
\item[(iv)] $\mathbb{F}_{mc}\vDash\Delta\text{C}$.\footnote{It is worth remarking that in the case of $(c)$-frames, we need the property $(z)$ to provide the validity of $\Delta$C (see the proof of item (iii) in this proposition); by comparison, in the case of quasi-filters, we do not need $(z)$, since the validity of $\Delta$C is now guaranteed by $(c)$ and $(m)$ together.}
\item[(v)] $\mathbb{F}_c\nvDash\Delta\text{C}$.
\item[(vi)] $\mathbb{F}_{cn}\nvDash\Delta\text{C}$.
\item[(vii)] $\mathbb{F}_{mcn}\nvDash\text{s}\Delta\text{M}$.
\item[(viii)] $\mathbb{F}_{m}\nvDash\text{s}\Delta\text{M}$.
\item[(ix)] $\mathbb{F}_{mc}\nvDash\text{s}\Delta\text{M}$.
\item[(x)] $\mathbb{F}_{mn}\nvDash\text{s}\Delta\text{M}$.
\item[(xi)] $\mathbb{F}_{mz}\vDash\text{s}\Delta\text{M}$.
\end{itemize}
\end{proposition}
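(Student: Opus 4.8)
The plan is to verify each of the eleven items by a direct semantic argument, working purely from the neighborhood semantics of $\Delta$ and the relevant frame properties from Definition~\ref{def.properties}. For the validities (i)--(iv) and (xi) I would fix an arbitrary model $\M=\lr{S,N,V}$ of the stated class and an arbitrary state $s$, unwind the clause $\M,s\vDash\Delta\phi \iff \phi^\M\in N(s)$ or $S\backslash\phi^\M\in N(s)$, and push the relevant closure property through. For instance, for (i) the unit axiom $\Delta\top$ holds because $\top^\M=S\in N(s)$ by $(n)$; for (ii), if $\phi^\M\in N(s)$ then since $\phi^\M\subseteq(\phi\vee\psi)^\M$ and $\phi^\M=S\backslash(\neg\phi\vee\chi)^\M\cap\cdots$ — more precisely $S\backslash(\neg\phi\vee\chi)^\M\subseteq\phi^\M$ — supplementation gives one of the two disjuncts of $\Delta\text{M}$, and symmetrically if $S\backslash\phi^\M\in N(s)$; for (xi), $(z)$ together with $(m)$ turns $\Delta\phi$ into $\phi^\M\in N(s)$ (using $(z)$ to move a complement in if necessary) and then $(m)$ yields $(\phi\vee\psi)^\M\in N(s)$, hence $\text{s}\Delta\text{M}$.

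Item (iii) is the one flagged in the footnote as slightly subtle: from $\Delta\phi$ and $\Delta\psi$ one has, in each of the four cases, that $\phi^\M$ or its complement lies in $N(s)$ and likewise for $\psi^\M$; using $(z)$ one can always reduce to the case $\phi^\M\in N(s)$ and $\psi^\M\in N(s)$, then $(c)$ gives $\phi^\M\cap\psi^\M=(\phi\land\psi)^\M\in N(s)$, so $\Delta(\phi\land\psi)$ holds. Item (iv) is the same but with $(m)$ and $(c)$ jointly replacing the role of $(z)$: $(m)$ lets one pass from a complement being a neighborhood to supersets, and combined with $(c)$ one again lands on $(\phi\land\psi)^\M\in N(s)$ (this is the standard fact that quasi-filters are closed enough to validate $\Delta\text{C}$).

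For the invalidities (v)--(x) the work is to exhibit, in each case, a small concrete countermodel in the stated class together with a substitution instance of the formula that fails at some state. I would build finite frames $\lr{S,N}$ with $S$ of size two or three, define $N(s)$ explicitly as a subset of $\mathcal{P}(\mathcal{P}(S))$ having the required closure properties (e.g. for (v) a frame that is closed under intersections but not under complements, for (vii) and (ix)--(x) a filter or quasi-filter or $(mn)$-frame), pick a valuation making $\phi^\M$ and $\psi^\M$ (and $\chi^\M$) equal to suitably chosen subsets, and check that the antecedent of the implication holds while the consequent fails at a chosen point. Since $(m)$, $(c)$, $(n)$, $(z)$ are all easy to verify on a two- or three-element frame by inspection, these are routine once the right $N$ is written down; the only care needed is to make sure $S\backslash\phi^\M$ is genuinely excluded from $N(s)$ in the cases about $\Delta\text{C}$, and that the "other disjunct" $\Delta(\neg\phi\vee\chi)$ also fails in the cases about $\text{s}\Delta\text{M}$ versus $\Delta\text{M}$ — i.e. the countermodels for $\text{s}\Delta\text{M}$ must still respect $\Delta\text{M}$ since $(m)$ is assumed.

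I do not expect any real obstacle here — everything is a direct semantic verification — but the part requiring the most attention is the bookkeeping in items (iii) and (iv), where the disjunctive form of the $\Delta$-clause forces a case split on which of $\phi^\M$, $S\backslash\phi^\M$ (resp. for $\psi$) is the neighborhood, and one must confirm that the stated properties suffice to collapse every case to the "positive" one; and on the countermodel side, making the frames for (vii)--(x) genuinely satisfy $(m)$ (and $(c)$ or $(n)$ as required) while still refuting $\text{s}\Delta\text{M}$, since a careless choice of $N$ could accidentally validate $\text{s}\Delta\text{M}$ or violate supplementation.
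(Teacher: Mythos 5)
Your overall strategy (direct semantic verification for the validities, small explicit countermodels for the invalidities) is exactly what the paper does, and items (i), (ii), (iii), (xi) are argued correctly. But your description of item (iv) contains a step that would fail as stated. You say that in a quasi-filter $(m)$ and $(c)$ "jointly replace the role of $(z)$" and that "one again lands on $(\phi\land\psi)^\M\in N(s)$", and later you describe the bookkeeping task for (iii) and (iv) as confirming that "the stated properties suffice to collapse every case to the positive one". Without $(z)$ this collapse is impossible: in a quasi-filter, knowing $S\backslash\phi^\M\in N(s)$ gives you no way to conclude $\phi^\M\in N(s)$, and in general $(\phi\land\psi)^\M$ need \emph{not} be a neighborhood even when $\Delta\phi$ and $\Delta\psi$ hold (take a filter generated by $\{t,u\}$ over $S=\{s,t,u\}$ with $\phi^\M=\{t,u\}$ and $\psi^\M=\{s\}$: then $(\phi\land\psi)^\M=\emptyset\notin N(s)$). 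The correct argument, which is the one the paper gives, is a three-way case split: if $\phi^\M\in N(s)$ and $\psi^\M\in N(s)$, use $(c)$ to get $(\phi\land\psi)^\M\in N(s)$; if instead $(\neg\phi)^\M\in N(s)$ (or symmetrically $(\neg\psi)^\M\in N(s)$), use $(m)$ alone to get $(\neg\phi)^\M\cup(\neg\psi)^\M=(\neg(\phi\land\psi))^\M\in N(s)$, so that $\Delta(\phi\land\psi)$ holds via the \emph{other} disjunct of the semantic clause, not via the positive one. This is precisely the asymmetry the footnote to (iv) is pointing at, so the fix is easy, but as written your step for (iv) is wrong rather than merely terse.

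On the invalidities, your plan is sound but left unexecuted: the content of (v)--(x) is the concrete countermodels, which you only promise to construct. The paper gives a two-point $(c)$-model for (v) (with the instance $\Delta p\land\Delta\neg p\to\Delta(p\land\neg p)$), a three-point $(cn)$-model for (vi), and a single three-point $(mcn)$-model refuting $\Delta p\to\Delta(p\vee q)$ for (vii), from which (viii)--(x) follow immediately because every $(mcn)$-frame is also an $(m)$-, $(mc)$- and $(mn)$-frame; you instead propose separate constructions for each of (vii)--(x), which would work but misses this one-countermodel shortcut. Your caution that the $\text{s}\Delta\text{M}$ countermodels must still validate $\Delta\text{M}$ is well placed; note that this forces $(\neg p)^\M\in N(s)$ while $(p\vee q)^\M$ and its complement are both excluded, which already requires at least three states, so the countermodels are not quite as automatic as "two- or three-element by inspection" suggests.
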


\begin{proof}\
\begin{itemize}
\item[(i)] Let $\M=\lr{S,N,V}$ be an $(n)$-model and $s\in S$. By $(n)$, $S\in N(s)$, that is, $\top^\M\in N(s)$, and thus $\top^\M\in N(s)$ or $S\backslash \top^\M\in N(s)$, and hence $\M,s\vDash\Delta\top$. By the arbitrariness of $\M$ and $s$, we conclude that $\mathbb{F}_n\vDash\Delta\text{N}$.
\item[(ii)] Let $\M=\lr{S,N,V}$ be an $(m)$-model and $s\in S$. Suppose that $\M,s\vDash\Delta\phi$, then $\phi^\M\in N(s)$ or $(\neg\phi)^\M\in N(s)$. If $\phi^\M\in N(s)$, then by $(m)$, $\phi^\M\cup\psi^\M\in N(s)$, which implies $\M,s\vDash\Delta(\phi\vee\psi)$; if $(\neg\phi)^\M\in N(s)$, then similarly, we can obtain $\M,s\vDash\Delta (\neg\phi\vee\chi)$. Either case gives us $\M,s\vDash\Delta (\phi\vee\psi)\vee\Delta(\neg\phi\vee\chi)$, as required.
\item[(iii)] Let $\mathcal{M}=\lr{S,N,V}$ be a $(cz)$-model and $s\in S$. Suppose $\M,s\vDash\Delta\phi$ and $\M,s\vDash\Delta\psi$, to show $\M,s\vDash\Delta(\phi\land\psi)$. From $\M,s\vDash\Delta\phi$ it follows that $\phi^M\in N(s)$ or $S\backslash \phi^\M\in N(s)$. Using $(z)$, we can infer $\phi^\M\in N(s)$. Similarly, from $\M,s\vDash\Delta\psi$ we can obtain $\psi^\M\in N(s)$. Now an application of $(c)$ gives us $\phi^\M\cap \psi^\M\in N(s)$, that is, $(\phi\land\psi)^\M\in N(s)$, and thus $\M,s\vDash\Delta(\phi\land\psi)$.
\item[(iv)] Let $\M=\lr{S,N,V}$ be a quasi-filter model and $s\in S$. Suppose that $\M,s\vDash\Delta\phi\land\Delta\psi$, then $\phi^\M\in N(s)$ or $(\neg\phi)^\M\in N(s)$, and $\psi^\M\in N(s)$ or $(\neg\psi)^\M\in N(s)$. Consider the following three cases:
\begin{itemize}
\item $\phi^\M\in N(s)$ and $\psi^\M\in N(s)$. By $(c)$, we obtain $\phi^\M\cap \psi^\M\in N(s)$, i.e. $(\phi\land\psi)^\M\in N(s)$, which gives $\M,s\vDash\Delta(\phi\land\psi)$.
\item $(\neg\phi)^\M\in N(s)$. By $(m)$, we infer $(\neg\phi)^\M\cup(\neg\psi)^\M\in N(s)$, i.e. $(\neg(\phi\land\psi))^\M\in N(s)$, which implies $\M,s\vDash\Delta(\phi\land\psi)$.
\item $(\neg\psi)^\M\in N(s)$. Similar to the second case, we can derive that $\M,s\vDash\Delta(\phi\land\psi)$.
\end{itemize}
\item[(v)] Consider an instance of $\Delta$C: $\Delta p\land\Delta\neg p\to\Delta (p\land \neg p)$ and a model $\M=\lr{S,N,V}$ where $S=\{s,t\}$, $N(s)=\{\{s\}\}$, $N(t)=\emptyset$, and $V(p)=\{s\}$. It should be obvious that $\M$ is a $(c)$-model. On the other hand, since $p^\M=\{s\}\in N(s)$, we have $\M,s\vDash\Delta p$, also, as $S\backslash (\neg p)^\M=p^\M\in N(s)$, we infer $\M,s\vDash\Delta\neg p$; however, $(p\land \neg p)^\M=\emptyset\notin N(s)$ and $S\backslash (p\land \neg p)^\M=S\notin N(s)$, thus $\M,s\nvDash\Delta(p\land\neg p)$. Therefore, $\M,s\nvDash\Delta p\land\Delta\neg p\to\Delta (p\land \neg p)$. We have thus found a $(c)$-model which falsifies an instance of $\Delta$C, and it can be concluded that $\mathbb{F}_c\nvDash\Delta\text{C}$.
\item[(vi)] Consider an instance of $\Delta$C: $\Delta p\land\Delta q\to\Delta (p\land q)$ and a model $\M=\lr{S,N,V}$ where $S=\{s,t,u\}$, $N(s)=\{\{s\},\{s,t\},S\}$, $N(t)=N(u)=\{S\}$, and $V(p)=\{s,t\}$, $V(q)=\{t,u\}$. It should be clear that $\M$ is a $(cn)$-model. Since $p^\M=\{s,t\}\in N(s)$, we have $\M,s\vDash\Delta p$; since $S\backslash q^\M=\{s\}\in N(s)$, thus $\M,s\vDash\Delta q$. However, $(p\land q)^\M=\{t\}\notin N(s)$ and $S\backslash (p\land q)^\M=\{s,u\}\notin N(s)$, thus $\M,s\nvDash\Delta(p\land q)$. Therefore, $\M,s\nvDash\Delta p\land\Delta q\to\Delta (p\land q)$. We have thus found a $(cn)$-model which falsifies an instance of $\Delta$C, and it can be concluded that $\mathbb{F}_{cn}\nvDash\Delta\text{C}$.
\item[(vii)] Consider the following instance of s$\Delta$M: $\Delta p\to\Delta(p\vee q)$, and a model $\M=\lr{S,N,V}$ where $S=\{s,t,u\}$, $N(s)=\{\{t,u\},S\}$, $N(t)=N(u)=\{S\}$, and $V(p)=\{s\}$, $V(q)=\{t\}$. One may easily verify that $\M$ is an $(mcn)$-model. However, $\M,s\nvDash\Delta p\to\Delta(p\vee q)$: on one hand, as $(\neg p)^\M=\{t,u\}\in N(s)$, we have $\M,s\vDash\Delta p$; on the other hand, since $(p\vee q)^\M=\{s,t\}\notin N(s)$ and $(\neg (p\vee q))^\M=\{u\}\notin N(s)$, thus $\M,s\nvDash\Delta (p\vee q)$.
\item[(viii)-(x)] Follows directly from item (vii), since $(mcn)$-models are also $(m)$-models, $(mc)$-models and $(mn)$-models.
\item[(xi)] Let $\M=\lr{S,N,V}$ be an $(mz)$-model and $s\in S$. Suppose that $\M,s\vDash\Delta \phi$, to show that $\M,s\vDash\Delta(\phi\vee\psi)$. By supposition, we have $\phi^\M\in N(s)$ or $(\neg\phi)^\M\in N(s)$. By $(z)$, it follows that $\phi^\M\in N(s)$. Then by $(m)$, it follows that $\phi^\M\cup \psi^\M\in N(s)$, viz. $(\phi\vee\psi)^\M\in N(s)$, and therefore $\M,s\vDash\Delta(\phi\vee\psi)$.
\end{itemize}
\end{proof}

\begin{corollary}[Soundness] The aforementioned 10 logics are sound with respect to their corresponding class of frames.
\begin{itemize}
\item ${\bf E^\Delta}={\bf EZ^\Delta}\subseteq \text{Th}_\Delta(\mathbb{F}_{\text{all}})\subseteq \text{Th}_\Delta(\mathbb{F}_z)$
\item ${\bf M^\Delta}\subseteq \text{Th}_\Delta(\mathbb{F}_m)$
\item ${\bf ECZ^\Delta}\subseteq \text{Th}_\Delta(\mathbb{F}_{cz})$
\item ${\bf EN^\Delta}={\bf ENZ^\Delta}\subseteq \text{Th}_\Delta(\mathbb{F}_n)\subseteq \text{Th}_\Delta(\mathbb{F}_{nz})$
\item ${\bf R^\Delta}\subseteq \text{Th}_\Delta(\mathbb{F}_{mc})$
\item ${\bf EMN^\Delta}\subseteq \text{Th}_\Delta(\mathbb{F}_{mn})$
\item ${\bf ECNZ^\Delta}\subseteq \text{Th}_\Delta(\mathbb{F}_{cnz})$
\item ${\bf K^\Delta}\subseteq \text{Th}_\Delta(\mathbb{F}_{mcn})$
\end{itemize}
\end{corollary}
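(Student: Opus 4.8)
The plan is to verify, for each system in the list, that every axiom is valid on the associated frame class and that every rule preserves validity on that class. Since each of the ten logics is obtained from the common base system ${\bf E^\Delta}$ by adding some subset of the three special axioms $\Delta\text{N}$, $\Delta\text{M}$, $\Delta\text{C}$, and since Proposition~\ref{prop.validinvalid} already records precisely the validities of these three axioms over the relevant classes, the real work reduces to treating the base system and then doing some bookkeeping with the anti\-monotonicity of $\text{Th}_\Delta$ (namely: if $\mathbb{K}\subseteq\mathbb{K}'$ then $\text{Th}_\Delta(\mathbb{K}')\subseteq\text{Th}_\Delta(\mathbb{K})$, because any formula valid on all frames in $\mathbb{K}'$ is in particular valid on all frames in $\mathbb{K}$).

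First I would show that ${\bf E^\Delta}$ is sound with respect to the class of all frames, hence a fortiori with respect to every subclass, in particular $\mathbb{F}_z$. TAUT is valid on every frame since the propositional clauses of the semantics are the classical ones, and MP preserves validity via the clauses for $\neg$ and $\land$. For RE$\Delta$: if $\phi\lra\psi$ is valid on a frame $\mathcal{F}$, then in every model $\M=\lr{S,N,V}$ based on $\mathcal{F}$ we have $\phi^\M=\psi^\M$, so $\phi^\M\in N(s)$ iff $\psi^\M\in N(s)$ and $S\backslash\phi^\M\in N(s)$ iff $S\backslash\psi^\M\in N(s)$; by the clause for $\Delta$ this yields $\M,s\vDash\Delta\phi$ iff $\M,s\vDash\Delta\psi$, so $\Delta\phi\lra\Delta\psi$ is valid on $\mathcal{F}$. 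For $\Delta\text{Equ}$: in any model $(\neg\phi)^\M=S\backslash\phi^\M$, hence $S\backslash(\neg\phi)^\M=\phi^\M$, so the two disjuncts in the clause for $\Delta\phi$ are exactly the two disjuncts in the clause for $\Delta\neg\phi$ with the roles of $X$ and $S\backslash X$ swapped; thus $\Delta\phi$ and $\Delta\neg\phi$ are true at exactly the same states, so $\Delta\text{Equ}$ is valid on every frame. This gives ${\bf E^\Delta}={\bf EZ^\Delta}\subseteq\text{Th}_\Delta(\mathbb{F}_{\text{all}})\subseteq\text{Th}_\Delta(\mathbb{F}_z)$.

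Next I would assemble the remaining systems by combining the base case with the relevant clauses of Proposition~\ref{prop.validinvalid} and anti\-monotonicity. For ${\bf M^\Delta}$: item (ii) gives $\mathbb{F}_m\vDash\Delta\text{M}$, so ${\bf M^\Delta}\subseteq\text{Th}_\Delta(\mathbb{F}_m)$. For ${\bf EN^\Delta}={\bf ENZ^\Delta}$: item (i) gives $\mathbb{F}_n\vDash\Delta\text{N}$, so ${\bf EN^\Delta}\subseteq\text{Th}_\Delta(\mathbb{F}_n)\subseteq\text{Th}_\Delta(\mathbb{F}_{nz})$. For ${\bf ECZ^\Delta}$: item (iii) gives $\mathbb{F}_{cz}\vDash\Delta\text{C}$. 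For ${\bf ECNZ^\Delta}={\bf ECZ^\Delta}+\Delta\text{N}$: since $\mathbb{F}_{cnz}\subseteq\mathbb{F}_{cz}$ and $\mathbb{F}_{cnz}\subseteq\mathbb{F}_n$, items (iii) and (i) give that $\Delta\text{C}$ and $\Delta\text{N}$ are both valid over $\mathbb{F}_{cnz}$. For ${\bf R^\Delta}={\bf M^\Delta}+\Delta\text{C}$ over quasi-filters $\mathbb{F}_{mc}$: item (ii) (via $\mathbb{F}_{mc}\subseteq\mathbb{F}_m$) and item (iv) give that $\Delta\text{M}$ and $\Delta\text{C}$ are both valid over $\mathbb{F}_{mc}$. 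For ${\bf EMN^\Delta}$: items (ii) and (i) via $\mathbb{F}_{mn}\subseteq\mathbb{F}_m$ and $\mathbb{F}_{mn}\subseteq\mathbb{F}_n$. For ${\bf K^\Delta}={\bf R^\Delta}+\Delta\text{N}$ over filters $\mathbb{F}_{mcn}$: items (ii), (iv), (i) apply via the inclusions $\mathbb{F}_{mcn}\subseteq\mathbb{F}_m$, $\mathbb{F}_{mcn}\subseteq\mathbb{F}_{mc}$, $\mathbb{F}_{mcn}\subseteq\mathbb{F}_n$.

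There is no serious obstacle: soundness is routine once Proposition~\ref{prop.validinvalid} is available. The one point that deserves care is why $(z)$ cannot be dropped in ${\bf ECZ^\Delta}$ and ${\bf ECNZ^\Delta}$ — that is, why these systems are matched with $(cz)$- and $(cnz)$-frames rather than $(c)$- and $(cn)$-frames — which is exactly the contrast between items (iii)/(iv) and items (v)/(vi) of Proposition~\ref{prop.validinvalid}: over a mere $(c)$- or $(cn)$-frame, $\Delta\text{C}$ can fail because $\M,s\vDash\Delta\phi$ only yields $\phi^\M\in N(s)$ \emph{or} $S\backslash\phi^\M\in N(s)$, and without $(z)$ (or without $(m)$, as in the quasi-filter case) one cannot force both $\phi^\M$ and $\psi^\M$ into $N(s)$ so as to apply $(c)$.
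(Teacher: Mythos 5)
Your proposal is correct and follows essentially the same route as the paper: the paper treats this corollary as an immediate consequence of Proposition~\ref{prop.validinvalid} (items (i)--(iv) giving the validity of $\Delta$N, $\Delta$M, $\Delta$C on the matching frame classes), together with the soundness of the base system ${\bf E^\Delta}$ (known from the cited earlier work) and antimonotonicity of $\text{Th}_\Delta$ under frame-class inclusion. Your explicit verification of TAUT, MP, RE$\Delta$ and $\Delta$Equ on all frames just fills in the base case the paper leaves to the reference, so there is nothing to correct.
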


From the next section, we will start to show the completeness results of these systems, with the aid of canonical neighborhood model constructions. As one will see, all the above systems may not be handled by a uniform canonical neighborhood function; instead, we need to distinguish systems excluding axiom $\Delta$M from those including it.

Given a system $\Lambda$ and the set $S^\Lambda$ of all maximal consistent sets for $\Lambda$,\weg{\footnote{The notation of superscript $(\--)^c$ should not be confused with the set complement notation $\backslash$, which has been seen when we define property $(z)$ in Def.~\ref{def.properties}.}} let $|\phi|_\Lambda$ be the {\em proof set} of $\phi$ relative to $\Lambda$, in symbol, $|\phi|_\Lambda=\{s\in S^\Lambda\mid \phi\in s\}$.\footnote{The terminology `the proof set' can be found on ~\cite[p. 57]{Chellas:1980}.} It is easy to show that $|\neg\phi|_\Lambda=S^\Lambda\backslash |\phi|_\Lambda$ and $|\phi\land\psi|_\Lambda=|\phi|_\Lambda\cap |\psi|_\Lambda$. We always omit the subscript $\Lambda$ when it is clear from the context.

%\section{$\Delta$-theories of frame classes having no $(m)$ or having $(z)$}\label{sec.sys-no-m}
\section{Systems excluding $\Delta$M}\label{sec.sys-no-m}

Given a proof system, a standard method of showing its completeness under neighborhood semantics is constructing the canonical neighborhood model, where one essential part is the definition of canonical neighborhood function.

%The following definition is taken from~\cite[Def.~9]{FanvD:neighborhood}.

\begin{definition}\label{def.cm-no-m} Let $\Lambda$ be a system excluding $\Delta$M.
A tuple $\M^\Lambda=\lr{S^\Lambda,N^\Lambda,V^\Lambda}$ is the {\em canonical neighborhood model} for $\Lambda$, if
\begin{itemize}
\item $S^\Lambda=\{s\mid s\text{ is a maximal consistent set for }\Lambda\}$,
\item $N^\Lambda(s)=\{|\phi|\mid \Delta\phi\in s\}$,
\item $V^\Lambda(p)=|p|$.
\end{itemize}
%where $|\phi|=\{s\in S^c\mid \phi\in s\}$ is the proof set of $\phi$ in ${\bf E^\Delta}$.
\end{definition}

Notice that thanks to axiom $\Delta$Equ, the function $N^\Lambda$ in the above definition has the property $(z)$, that is, for all $s\in S^\Lambda$ and $X\subseteq S^\Lambda$, if $X\in N^\Lambda(s)$, then $S^\Lambda\backslash X\in N^\Lambda(s)$.

\begin{theorem}\cite[Thm.~1, Thm.~2]{FanvD:neighborhood}\label{thm.comp-E}
%${\bf E^\Delta}={\bf EZ^\Delta}$ is strongly complete with respect to the class of all neighborhood frames, ${\bf E^\Delta}={\bf EZ^\Delta}$ is strongly complete with respect to the class of all $(z)$-frames.
%${\bf E^\Delta}$ is strongly complete with respect to the class of all neighborhood frames and also to the class of all $(z)$-frames. Therefore, ${\bf E^\Delta}=\text{Th}_\Delta(\mathbb{F}_\text{all})=\text{Th}_\Delta(\mathbb{F}_z)={\bf EZ^\Delta}$.
${\bf E^\Delta}={\bf EZ^\Delta}$ is strongly complete with respect to the class of all neighborhood frames and also w.r.t. the class of all $(z)$-frames. Therefore, ${\bf E^\Delta}=\text{Th}_\Delta(\mathbb{F}_\text{all})={\bf EZ^\Delta}=\text{Th}_\Delta(\mathbb{F}_z)$.
\end{theorem}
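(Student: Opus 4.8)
The plan is to run the standard canonical-model argument for the system $\Lambda$ (where $\Lambda$ is ${\bf E^\Delta}={\bf EZ^\Delta}$), using the canonical neighborhood model $\M^\Lambda=\lr{S^\Lambda,N^\Lambda,V^\Lambda}$ from Definition~\ref{def.cm-no-m}. First I would record the two ingredients already noted after that definition: a Lindenbaum lemma producing, for any $\Lambda$-consistent set, a maximal consistent extension in $S^\Lambda$; and the fact that $N^\Lambda$ has property $(z)$ --- if $X\in N^\Lambda(s)$ then $X=|\phi|$ for some $\phi$ with $\Delta\phi\in s$, whence $\Delta\neg\phi\in s$ by $\Delta$Equ, so $S^\Lambda\backslash X=|\neg\phi|\in N^\Lambda(s)$. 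The elementary identities $|\neg\phi|=S^\Lambda\backslash|\phi|$ and $|\phi\land\psi|=|\phi|\cap|\psi|$ are also taken for granted.

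The heart of the argument is the Truth Lemma: for every formula $\phi$ and every $s\in S^\Lambda$, $\M^\Lambda,s\vDash\phi$ iff $\phi\in s$, equivalently $\phi^{\M^\Lambda}=|\phi|$. This is proved by induction on $\phi$; the propositional cases are routine from the standard properties of maximal consistent sets, so the only case of interest is $\Delta\phi$. Here, by the semantic clause for $\Delta$ and the induction hypothesis $\phi^{\M^\Lambda}=|\phi|$, we have $\M^\Lambda,s\vDash\Delta\phi$ iff $|\phi|\in N^\Lambda(s)$ or $S^\Lambda\backslash|\phi|=|\neg\phi|\in N^\Lambda(s)$. If $\Delta\phi\in s$, then $|\phi|\in N^\Lambda(s)$ directly by the definition of $N^\Lambda$. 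Conversely, if $|\phi|\in N^\Lambda(s)$, choose $\psi$ with $\Delta\psi\in s$ and $|\psi|=|\phi|$; then $\psi\lra\phi$ lies in every maximal consistent set, so $\vdash_\Lambda\psi\lra\phi$, and RE$\Delta$ gives $\vdash_\Lambda\Delta\psi\lra\Delta\phi$, hence $\Delta\phi\in s$. If instead $|\neg\phi|\in N^\Lambda(s)$, the same reasoning yields $\Delta\neg\phi\in s$, and then $\Delta$Equ delivers $\Delta\phi\in s$.

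With the Truth Lemma established, strong completeness with respect to $\mathbb{F}_{\text{all}}$ follows in the usual way: if $\Gamma\nvdash_\Lambda\phi$, extend $\Gamma\cup\{\neg\phi\}$ to some $s\in S^\Lambda$, and then $\M^\Lambda,s\vDash\Gamma$ while $\M^\Lambda,s\nvDash\phi$, so $\Gamma\nvDash\phi$ over all frames. Since $N^\Lambda$ has property $(z)$, the very same $\M^\Lambda$ is a $(z)$-model, so the identical argument gives strong completeness with respect to $\mathbb{F}_z$. Combining these with the soundness inclusions ${\bf E^\Delta}\subseteq\text{Th}_\Delta(\mathbb{F}_{\text{all}})\subseteq\text{Th}_\Delta(\mathbb{F}_z)$ from the Soundness Corollary yields the chain of equalities ${\bf E^\Delta}=\text{Th}_\Delta(\mathbb{F}_{\text{all}})={\bf EZ^\Delta}=\text{Th}_\Delta(\mathbb{F}_z)$. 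There is no deep obstacle here --- this is the base case of the hierarchy, attributed to~\cite{FanvD:neighborhood} --- but the one point that must be handled with care is the forward direction of the $\Delta\phi$ case, where $\Delta$Equ is used to collapse the disjunct ``$S^\Lambda\backslash\phi^{\M^\Lambda}\in N^\Lambda(s)$'' back to $\Delta\phi\in s$; this is precisely the step that simultaneously secures completeness over $\mathbb{F}_{\text{all}}$ and over $\mathbb{F}_z$, and the only place where $\Delta$Equ (beyond RE$\Delta$) enters.
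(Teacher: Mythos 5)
Your proposal is correct and follows exactly the intended argument: the paper states this theorem as a citation of~\cite{FanvD:neighborhood}, and the proof there (and the one the surrounding text presupposes) is precisely the canonical-model construction of Def.~\ref{def.cm-no-m}, with the truth lemma's $\Delta\phi$ case handled via RE$\Delta$ and $\Delta$Equ and the $(z)$-property of $N^\Lambda$ giving completeness over $\mathbb{F}_z$ as well. No gaps; your reconstruction matches the paper's approach.
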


In what follows, we will extend the canonical model construction to all systems excluding $\Delta$M listed above.

%It is not hard to show that $\Delta$C is invalid on the class of all $(i)$-frames,\footnote{For example, consider a formula $\Delta p\land\Delta\neg p\to\Delta (p\land \neg p)$ and a model $\M=\lr{S,N,V}$ where $S=\{s,t\}$, $N(s)=\{\{s\}\}$, $N(t)=\emptyset$, and $V(p)=\{s\}$. It should be obvious that $\M$ is an $(i)$-model. On the other hand, since $p^\M=\{s\}\in N(s)$, we have $\M,s\vDash\Delta p$, also, as $S\backslash (\neg p)^\M=p^\M\in N(s)$, we infer $\M,s\vDash\Delta\neg p$; however, $(p\land \neg p)^\M=\emptyset\notin N(s)$ and $S\backslash (p\land \neg p)^\M=S\notin N(s)$, thus $\M,s\nvDash\Delta(p\land\neg p)$. Therefore, $\M,s\nvDash\Delta p\land\Delta\neg p\to\Delta (p\land \neg p)$.} and thus ${\bf (EC)^\Delta}$ is {\em not} sound (and strongly complete) with respect to the class of all frames satisfying $(i)$. Despite this, it is indeed sound and strongly complete with respect to a more restricted frame class.

%We have seen in Prop.~\ref{prop.validinvalid} (v) that $\Delta$C is invalid on the class of all $(c)$-frames, and thus ${\bf EC^\Delta}$ is {\em not} sound (and strongly complete) with respect to the class of all $(c)$-frames. Despite this, it is indeed sound and strongly complete with respect to a more restricted frame class.

\begin{theorem}\label{thm.comp-EC}
${\bf ECZ^\Delta}$ is strongly complete with respect to the class of all $(cz)$-frames.
\end{theorem}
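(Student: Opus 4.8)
The plan is to use the canonical neighborhood model $\M^\Lambda = \lr{S^\Lambda, N^\Lambda, V^\Lambda}$ from Definition~\ref{def.cm-no-m}, instantiated at $\Lambda = {\bf ECZ^\Delta}$, and to proceed in three stages: first verify that $\M^\Lambda$ is actually a $(cz)$-model; second prove a truth lemma stating $\M^\Lambda, s \vDash \phi \iff \phi \in s$ for all formulas $\phi$ and all $s \in S^\Lambda$; third draw the usual conclusion that ${\bf ECZ^\Delta}$-consistent sets are satisfiable on a $(cz)$-frame, which together with the soundness direction (the corollary above, $\text{Th}_\Delta(\mathbb{F}_{cz}) \supseteq {\bf ECZ^\Delta}$) yields strong completeness. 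Since ${\bf ECZ^\Delta}$ does not contain $\Delta$M, Definition~\ref{def.cm-no-m} applies, and the remark following it already gives property $(z)$ for $N^\Lambda$, coming from axiom $\Delta$Equ.

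First I would check the frame properties. Property $(z)$ is free, as just noted. For property $(c)$: suppose $X, Y \in N^\Lambda(s)$, so $X = |\phi|$ with $\Delta\phi \in s$ and $Y = |\psi|$ with $\Delta\psi \in s$. Since $s$ is maximal consistent and contains axiom $\Delta$C, $\Delta\phi \land \Delta\psi \to \Delta(\phi \land \psi)$, we get $\Delta(\phi \land \psi) \in s$; hence $|\phi \land \psi| \in N^\Lambda(s)$, and $|\phi \land \psi| = |\phi| \cap |\psi| = X \cap Y$, as required. So $\M^\Lambda$ is indeed a $(cz)$-model.

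The core is the truth lemma, proved by induction on $\phi$. The boolean cases are standard (using $|\neg\phi| = S^\Lambda \setminus |\phi|$ and $|\phi \land \psi| = |\phi| \cap |\psi|$, plus maximal consistency), and $V^\Lambda(p) = |p|$ handles the atomic case. The modal case $\phi = \Delta\psi$ is the one that matters. For the direction from membership to truth: if $\Delta\psi \in s$, then $|\psi| \in N^\Lambda(s)$ by definition; by the induction hypothesis $\psi^{\M^\Lambda} = |\psi|$, so $\psi^{\M^\Lambda} \in N^\Lambda(s)$, whence $\M^\Lambda, s \vDash \Delta\psi$. For the converse: suppose $\M^\Lambda, s \vDash \Delta\psi$, i.e. $\psi^{\M^\Lambda} \in N^\Lambda(s)$ or $S^\Lambda \setminus \psi^{\M^\Lambda} \in N^\Lambda(s)$; by the induction hypothesis this reads $|\psi| \in N^\Lambda(s)$ or $S^\Lambda \setminus |\psi| = |\neg\psi| \in N^\Lambda(s)$. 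In the first case, $|\psi| = |\chi|$ for some $\chi$ with $\Delta\chi \in s$; the point is that $|\psi| = |\chi|$ means $\vdash_\Lambda \psi \lra \chi$ (since the proof sets of two formulas coincide exactly when the biconditional is a theorem — this is the standard fact about proof sets), so by rule $\text{RE}\Delta$, $\vdash_\Lambda \Delta\psi \lra \Delta\chi$, and since $\Delta\chi \in s$ we get $\Delta\psi \in s$. In the second case, $|\neg\psi| \in N^\Lambda(s)$ gives $\Delta\neg\psi \in s$ by the same argument, and then $\Delta$Equ ($\Delta\neg\psi \lra \Delta\psi$) yields $\Delta\psi \in s$. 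Either way $\Delta\psi \in s$, completing the induction.

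I expect the main obstacle — such as it is — to be the careful handling of the modal case in the truth lemma, specifically the step that from $|\psi| \in N^\Lambda(s)$ one may conclude $\Delta\psi \in s$: one must not assume $\psi$ itself witnesses membership, but rather some $\chi$ with $|\chi| = |\psi|$, and then invoke $\text{RE}\Delta$ together with the lemma that equal proof sets come from provably equivalent formulas. This is routine but is the place where $\text{RE}\Delta$ is genuinely used, and it is worth spelling out. Everything else — the Lindenbaum construction giving $S^\Lambda$, the boolean cases, and the final packaging of strong completeness from the truth lemma plus soundness — is entirely standard and parallels the proof of Theorem~\ref{thm.comp-E} for ${\bf E^\Delta}$; the only new ingredient here is the verification of property $(c)$ from axiom $\Delta$C, which is immediate.
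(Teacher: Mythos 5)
Your proposal is correct and follows essentially the same route as the paper: the paper's proof also uses the canonical model of Definition~\ref{def.cm-no-m} and reduces everything to checking that $N^\Lambda$ has property $(c)$ via axiom $\Delta$C (with $(z)$ from $\Delta$Equ), appealing to Theorem~\ref{thm.comp-E} for the truth lemma and the remaining machinery that you spell out explicitly. The only difference is one of presentation, not substance.
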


\begin{proof}
By Thm.~\ref{thm.comp-E}, it suffices to show that $N^\Lambda$ possesses $(c)$. This is guaranteed by axiom $\Delta$C: suppose $X\in N^\Lambda(s)$ and $Y\in N^\Lambda(s)$, then by definition of $N^\Lambda$, $X=|\phi|\in N^\Lambda(s)$ and $Y=|\psi|\in N^\Lambda(s)$ for some $\phi$ and $\psi$, thus $\Delta\phi\in s$ and $\Delta\psi\in s$, which implies $\Delta(\phi\land\psi)\in s$ because of axiom $\Delta$C, and therefore $|\phi\land \psi|\in N^\Lambda(s)$, that is, $X\cap Y\in N^\Lambda(s)$.
\weg{By Thm.~\ref{thm.comp-E}, it suffices to show that $\Delta$C is valid on $(cz)$-frames, and that $N^c$ possesses $(i)$ and $(c)$.

Given any $(i)\&(c)$-model $\mathcal{F}=\lr{S,N,V}$ and $s\in S$, suppose $\M,s\vDash\Delta\phi$ and $\M,s\vDash\Delta\psi$, to show $\M,s\vDash\Delta(\phi\land\psi)$. From $\M,s\vDash\Delta\phi$ it follows that $\phi^M\in N(s)$ or $S\backslash \phi^\M\in N(s)$. Using $(c)$, we can infer $\phi^\M\in N(s)$. Similarly, from $\M,s\vDash\Delta\psi$ we can obtain $\psi^\M\in N(s)$. Now an application of $(i)$ gives us $\phi^\M\cap \psi^\M\in N(s)$, that is, $(\phi\land\psi)^\M\in N(s)$, and thus $\M,s\vDash\Delta(\phi\land\psi)$.

As for the latter, $\Delta$Equ guarantees $(c)$, and $\Delta$C provides $(i)$.}
\end{proof}

\begin{theorem}\label{thm.comp-EN}
%${\bf EN^\Delta}={\bf ENZ^\Delta}$ is strongly complete with respect to the class of all $(n)$-frames, ${\bf EN^\Delta}={\bf ENZ^\Delta}$ is strongly complete with respect to the class of all $(nz)$-frames.
${\bf EN^\Delta}={\bf ENZ^\Delta}$ is strongly complete with respect to the class of all $(n)$-frames and also w.r.t. the class of all $(nz)$-frames. Therefore, ${\bf EN^\Delta}=\text{Th}_\Delta(\mathbb{F}_n)={\bf ENZ^\Delta}=\text{Th}_\Delta(\mathbb{F}_{nz})$.
\end{theorem}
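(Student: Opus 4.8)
The plan is to follow the same strategy as in Theorem~\ref{thm.comp-EC}: reuse the canonical model machinery of Theorem~\ref{thm.comp-E} and merely check that the canonical neighborhood function $N^\Lambda$ (for $\Lambda={\bf EN^\Delta}={\bf ENZ^\Delta}$) has the extra property $(n)$, i.e. that $S^\Lambda\in N^\Lambda(s)$ for every maximal consistent set $s$. Since ${\bf E^\Delta}={\bf EZ^\Delta}$ is already strongly complete for $\mathbb{F}_\text{all}$ and for $\mathbb{F}_z$, and since by the remark after Definition~\ref{def.cm-no-m} the function $N^\Lambda$ always has $(z)$ thanks to $\Delta$Equ, it will be enough to show $N^\Lambda$ additionally satisfies $(n)$; this simultaneously gives completeness for $(n)$-frames and for $(nz)$-frames.

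First I would recall that the Truth Lemma for the canonical model of any system extending ${\bf E^\Delta}$ (hence in particular for $\Lambda$) holds exactly as in the proof of Theorem~\ref{thm.comp-E}: for all $\phi$ and all $s\in S^\Lambda$, $\M^\Lambda,s\vDash\phi$ iff $\phi\in s$, and consequently $\phi^{\M^\Lambda}=|\phi|$. Given this, strong completeness reduces to showing that $\M^\Lambda$ is an $(n)$-model (resp.\ an $(nz)$-model): from a $\Lambda$-consistent set extend to a maximal consistent $s$, note $s$ lives in an $(n)$-frame, apply the Truth Lemma, done. The soundness directions are already supplied by Proposition~\ref{prop.validinvalid}(i) and the Soundness Corollary.

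So the only real content is the verification of $(n)$ for $N^\Lambda$. Here I would argue: axiom $\Delta$N gives $\Delta\top\in s$ for every $s\in S^\Lambda$ (every maximal consistent set contains all theorems). By the definition $N^\Lambda(s)=\{|\phi|\mid \Delta\phi\in s\}$, this yields $|\top|\in N^\Lambda(s)$. Since $\top$ is a theorem, $|\top|=\{s\in S^\Lambda\mid \top\in s\}=S^\Lambda$, so $S^\Lambda\in N^\Lambda(s)$, which is precisely property $(n)$. Combined with the already-established $(z)$ of $N^\Lambda$, the canonical model is an $(nz)$-model, and a fortiori an $(n)$-model.

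I do not expect any genuine obstacle here — the proof is a one-line adaptation of Theorem~\ref{thm.comp-EC}, the ``hard part'' being merely the bookkeeping observation that $|\top|_\Lambda=S^\Lambda$ (immediate from $\top$ being a theorem) so that $\Delta$N delivers exactly $S^\Lambda\in N^\Lambda(s)$ rather than some proper subset. The final equalities ${\bf EN^\Delta}=\text{Th}_\Delta(\mathbb{F}_n)$ and ${\bf ENZ^\Delta}=\text{Th}_\Delta(\mathbb{F}_{nz})$ then follow by combining this completeness with the soundness inclusions ${\bf EN^\Delta}={\bf ENZ^\Delta}\subseteq\text{Th}_\Delta(\mathbb{F}_n)\subseteq\text{Th}_\Delta(\mathbb{F}_{nz})$ from the Soundness Corollary.
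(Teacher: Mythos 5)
Your proposal is correct and follows essentially the same route as the paper: reduce to Theorem~\ref{thm.comp-E} via the canonical model of Definition~\ref{def.cm-no-m}, then use axiom $\Delta$N to get $\Delta\top\in s$, hence $|\top|=S^\Lambda\in N^\Lambda(s)$, with property $(z)$ already supplied by $\Delta$Equ. No discrepancies worth noting.
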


\begin{proof}
By Thm.~\ref{thm.comp-E}, it suffices to show that $N^\Lambda$ possesses the property $(n)$.\weg{\footnote{There may be other canonical neighborhood functions which also work for this theorem, but we choose this function, is mainly because the function is very simple.}} This is immediate due to $\Delta$N and the definition of $N^\Lambda$: since $\vdash\Delta\top$, we have that for all $s\in S^\Lambda$, $\Delta\top\in s$, and then $|\top|\in N^\Lambda(s)$, that is, $S^c\in N^\Lambda(s)$.
\end{proof}

%By Thm.~\ref{thm.comp-EC} and Thm.~\ref{thm.comp-EN}, we obtain the following result.
\begin{theorem}
${\bf ECNZ^\Delta}$ is strongly complete with respect to the class of all $(cnz)$-frames.
\end{theorem}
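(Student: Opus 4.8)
The plan is to proceed exactly as in the proofs of Theorems~\ref{thm.comp-EC} and~\ref{thm.comp-EN}, using the same canonical neighborhood model $\M^\Lambda=\lr{S^\Lambda,N^\Lambda,V^\Lambda}$ from Definition~\ref{def.cm-no-m} with $\Lambda={\bf ECNZ^\Delta}$. By Theorem~\ref{thm.comp-E} (more precisely, by the truth lemma underlying it, which only requires $N^\Lambda$ to be a well-defined canonical function with property $(z)$), it suffices to verify that $N^\Lambda$ possesses all three of the properties $(c)$, $(n)$, and $(z)$; the latter comes for free from axiom $\Delta$Equ, as already noted after Definition~\ref{def.cm-no-m}. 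So the real content is just to check $(c)$ and $(n)$ separately.

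First I would show $(c)$: given $X,Y\in N^\Lambda(s)$, write $X=|\phi|$ and $Y=|\psi|$ with $\Delta\phi,\Delta\psi\in s$; since ${\bf ECNZ^\Delta}$ contains $\Delta$C, we get $\Delta(\phi\land\psi)\in s$, hence $X\cap Y=|\phi\land\psi|\in N^\Lambda(s)$. This is verbatim the argument in the proof of Theorem~\ref{thm.comp-EC}. Next I would show $(n)$: since $\Delta$N is an axiom, $\vdash\Delta\top$, so $\Delta\top\in s$ for every $s\in S^\Lambda$, whence $S^\Lambda=|\top|\in N^\Lambda(s)$. This is verbatim the argument in the proof of Theorem~\ref{thm.comp-EN}. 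Combining these with property $(z)$, the canonical model is a $(cnz)$-model, and the truth lemma from Theorem~\ref{thm.comp-E} applies unchanged, giving strong completeness with respect to the class of all $(cnz)$-frames. Soundness is already recorded in the Soundness corollary.

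There is essentially no obstacle here: the only thing one must be slightly careful about is that the three property-checks are genuinely independent and do not interfere — adding axioms $\Delta$C and $\Delta$N to ${\bf E^\Delta}$ does not disturb the property $(z)$ guaranteed by $\Delta$Equ, nor does it affect the proof that $N^\Lambda$ is well defined or the inductive truth lemma, since those arguments never used the absence of these axioms. Thus the theorem follows by simply amalgamating the two earlier proofs, and I would state it in one short paragraph of the form: ``By Thm.~\ref{thm.comp-E}, it suffices to show $N^\Lambda$ has $(c)$ and $(n)$. For $(c)$, argue as in Thm.~\ref{thm.comp-EC}. For $(n)$, argue as in Thm.~\ref{thm.comp-EN}.''
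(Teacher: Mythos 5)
Your proposal is correct and matches the paper's own proof essentially verbatim: the paper likewise reduces the theorem, via Thm.~\ref{thm.comp-E}, to checking that the canonical function $N^\Lambda$ of Def.~\ref{def.cm-no-m} has property $(c)$ (by axiom $\Delta$C, as in Thm.~\ref{thm.comp-EC}) and property $(n)$ (by axiom $\Delta$N, as in Thm.~\ref{thm.comp-EN}), with $(z)$ already secured by $\Delta$Equ. No further comment is needed.
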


\begin{proof}
%The soundness follows from the soundness of ${\bf E^\Delta}$ (Thm.~\ref{thm.comp-E}) and items (i) and (iii) of Prop.~\ref{prop.validinvalid}. For the completeness, it suffices to show that $N^c$ has the properties $(c)$, $(n)$ and $(z)$. Property $(c)$ is guaranteed by axiom $\Delta$C (see the proof of Thm.~\ref{thm.comp-EC}), $(n)$ is warranted by axiom $\Delta$N (see the proof of Thm.~\ref{thm.comp-EN}), and $(z)$ is provided by axiom $\Delta$Equ (see the note before Thm.~\ref{thm.comp-E}).
By Thm.~\ref{thm.comp-E}, it suffices to show that $N^\Lambda$ has the properties $(c)$ and $(n)$. Property $(c)$ is provided by axiom $\Delta$C (see the proof of Thm.~\ref{thm.comp-EC}), and $(n)$ is warranted by axiom $\Delta$N (see the proof of Thm.~\ref{thm.comp-EN}).%, and $(z)$ is provided by axiom $\Delta$Equ (see the note before Thm.~\ref{thm.comp-E}).
\end{proof}

%\section{$\Delta$-theories of frame classes having $(m)$ but no $(z)$}\label{sec.sys-m}
\section{Systems including $\Delta$M}\label{sec.sys-m}

In this section, we show that the systems including $\Delta$M listed above are strongly complete with respect to the corresponding frame classes. For this, we construct the canonical neighborhood model for any system extending ${\bf M^\Delta}$, where the crucial definition is the canonical neighborhood function. The definition of $N^\Lambda$ below is inspired by a function $\lambda$ introduced in~\cite{DBLP:journals/ndjfl/Kuhn95}.\footnote{The difference between $N^\Lambda$ and $\lambda$ lies in the codomains: $N^\Lambda$'s codomain is $\mathcal{P}(\mathcal{P}(S^\Lambda))$, whereas $\lambda$'s is $\mathcal{P}(\mathcal{L}(\Delta))$.}

%We first consider the system ${\bf M^\Delta}$. The following result tells us that ${\bf M^\Delta}$ is sound with respect to the class of frames satisfying $(s)$.

%\section{Monotonic and Regular contingency logics}

%Give a sequent calculus that is sound and strongly complete with respect to the class of neighborhood frames under the old neighborhood semantics, and then translate the sequent calculus to an equivalent Hilbert-style system.

%$\mathbb{M}^\Delta\vdash\Delta\phi\to\Delta\top$ this is equivalent to $\dfrac{\psi}{\Delta\phi\to\Delta\psi}$

%$\mathbb{M}^\Delta\vdash\Delta\phi\to\Delta(\phi\to\psi)\vee\Delta(\neg\phi\to\chi)$

%$\mathbb{R}^\Delta\vdash\Delta\phi\land\Delta\psi\to\Delta(\phi\land\psi)$

\weg{\begin{proposition}
$\Delta$M is valid on the class of frames satisfying $(s)$.
\end{proposition}

\begin{proof}
Let $\M=\lr{S,N,V}$ be a $(s)$-model and $s\in S$. Suppose that $\M,s\vDash\Delta\phi$, then $\phi^\M\in N(s)$ or $(\neg\phi)^\M\in N(s)$. If $\phi^\M\in N(s)$, then by $(s)$, $\phi^\M\cup\psi^\M\in N(s)$, which implies $\M,s\vDash\Delta(\phi\vee\psi)$; if $(\neg\phi)^\M\in N(s)$, then similarly, we can obtain $\M,s\vDash\Delta (\neg\phi\vee\chi)$. Either case gives us $\M,s\vDash\Delta (\phi\vee\psi)\vee\Delta(\neg\phi\vee\chi)$, as required.
\end{proof}}

%For the completeness, we construct the canonical neighborhood model for any system extending ${\bf M^\Delta}$, where the crucial definition is the canonical neighborhood function. The definition of $N^c$ below is inspired by a function $\lambda$ introduced in~\cite{DBLP:journals/ndjfl/Kuhn95}.\footnote{The difference between $N^c$ and $\lambda$ lies in the codomains: $N^c$'s codomain is $\mathcal{P}(\mathcal{P}(S^c))$, whereas $\lambda$'s is $\mathcal{P}(\mathcal{L}(\Delta))$.}

\begin{definition}\label{def.canonicalmodel} Let $\Lambda$ be a system extending ${\bf M^\Delta}$. %including $\Delta$M.
A triple $\M^\Lambda=\lr{S^\Lambda,N^\Lambda,V^\Lambda}$ is a {\em canonical neighborhood model} for $\Lambda$, if\weg{\footnote{Note that there may be various canonical neighborhood models, as long as each of them satisfies the conditions specified by this definition. This is in line with the case of classical modal logics, see~\cite[Chap.~9]{Chellas:1980}.}}
\begin{itemize}
\item $S^\Lambda=\{s\mid s\text{ is a maximal consistent set for }\Lambda\}$,
%\item For each $s\in S^c$, $N^c(s)=\{|\phi|\mid \Delta(\phi\vee\psi)\in s\text{ for every }\psi\}$,
\item For each $s\in S^\Lambda$, $|\phi|\in N^\Lambda(s)$ iff $\Delta(\phi\vee\psi)\in s\text{ for every }\psi$,
\item For each $p\in \BP$, $V^\Lambda(p)=|p|$.
\end{itemize}
\end{definition}

We need to show that $N^\Lambda$ is well-defined.
\begin{lemma} Let $s\in S^\Lambda$ as defined in Def.~\ref{def.canonicalmodel}.
If $|\phi|=|\phi'|$, then $(\Delta(\phi\vee\psi)\in s\text{ for every }\psi)$ iff $(\Delta(\phi'\vee\psi)\in s\text{ for every }\psi)$.
\end{lemma}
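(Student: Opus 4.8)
The plan is to show that the condition ``$\Delta(\phi\vee\psi)\in s$ for every $\psi$'' depends only on the proof set $|\phi|$, not on the syntactic representative $\phi$. The natural route is to prove a stronger, purely syntactic fact: if $|\phi|=|\phi'|$ then $\vdash_\Lambda \phi\lra\phi'$, i.e. $\phi\lra\phi'$ is a theorem of $\Lambda$. This is the standard ``proof sets determine provable equivalence'' observation: $|\phi|=|\phi'|$ means $\phi\in s \iff \phi'\in s$ for every maximal consistent set $s$, so $\phi\to\phi'$ and $\phi'\to\phi$ each belong to every maximal consistent set, hence (by the Lindenbaum lemma / the fact that an unprovable formula extends to a maximal consistent set containing its negation) $\vdash_\Lambda \phi\to\phi'$ and $\vdash_\Lambda\phi'\to\phi$, so $\vdash_\Lambda\phi\lra\phi'$.

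Once that is in hand, the rest is routine. First I would note that from $\vdash_\Lambda\phi\lra\phi'$ one gets, by propositional reasoning (TAUT and MP), $\vdash_\Lambda (\phi\vee\psi)\lra(\phi'\vee\psi)$ for an arbitrary $\psi$. Then applying the rule $\text{RE}\Delta$ yields $\vdash_\Lambda \Delta(\phi\vee\psi)\lra\Delta(\phi'\vee\psi)$. Since theorems lie in every maximal consistent set and maximal consistent sets are closed under MP, this gives $\Delta(\phi\vee\psi)\in s \iff \Delta(\phi'\vee\psi)\in s$ for each $s\in S^\Lambda$ and each $\psi$. Quantifying over all $\psi$ then gives exactly the biconditional in the statement: $(\Delta(\phi\vee\psi)\in s$ for every $\psi)$ iff $(\Delta(\phi'\vee\psi)\in s$ for every $\psi)$. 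Note the argument is symmetric in $\phi$ and $\phi'$, so it suffices to establish one direction.

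I do not expect a genuine obstacle here; the lemma is a well-definedness check. The only point requiring a little care is the first step — justifying $|\phi|=|\phi'|\Rightarrow\ \vdash_\Lambda\phi\lra\phi'$ — which relies on Lindenbaum's lemma for $\Lambda$ (every $\Lambda$-consistent set extends to a maximal consistent one), a standard fact available for any of the systems considered since they all contain TAUT and MP. Alternatively one could avoid invoking provable equivalence and argue contrapositively and semantically-internally, but the provable-equivalence route is cleanest and also immediately supplies what is needed for later steps (e.g. the truth lemma). So the write-up would be: (1) show $|\phi|=|\phi'|$ implies $\vdash_\Lambda\phi\lra\phi'$; (2) propositionally derive $\vdash_\Lambda(\phi\vee\psi)\lra(\phi'\vee\psi)$; (3) apply $\text{RE}\Delta$; (4) transfer to membership in $s$ and quantify over $\psi$.
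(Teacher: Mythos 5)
Your proposal is correct and follows essentially the same route as the paper: from $|\phi|=|\phi'|$ infer $\vdash\phi\lra\phi'$, hence $\vdash(\phi\vee\psi)\lra(\phi'\vee\psi)$, then apply RE$\Delta$ and transfer membership via maximal consistency. The only difference is that you spell out the Lindenbaum-style justification of the first step, which the paper takes for granted.
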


\begin{proof}
Suppose that $|\phi|=|\phi'|$, then $\vdash\phi\lra\phi'$, then for every $\psi$, $\vdash\phi\vee\psi\lra\phi'\vee\psi$. By RE$\Delta$, we have $\vdash\Delta(\phi\vee\psi)\lra\Delta(\phi'\vee\psi)$, thus (for every $\psi$, $\Delta(\phi\vee\psi)\in s$) iff (for every $\psi$, $\Delta(\phi'\vee\psi)\in s$).
\end{proof}

Def.~\ref{def.canonicalmodel} does {\em not} specify the function $N^\Lambda$ completely; besides the sets of the form $|\phi|$ that satisfy this definition, $N^\Lambda(s)$ may contain other sets $X\subseteq S^\Lambda$ that are not of the form $|\phi|$ for any $\mathcal{L}(\Delta)$-formula $\phi$. Therefore, each logic under consideration has many canonical models.

\begin{lemma}\label{lemma.truthlemma-mc} Let $\M^\Lambda$ be an arbitrary canonical model for any system extending ${\bf M^\Delta}$\weg{including axiom $\Delta$M}. Then for all $\phi\in \mathcal{L}(\Delta)$, for all $s\in S^\Lambda$, we have
$\M^\Lambda,s\vDash\phi\iff \phi\in s,$ i.e. $\phi^{\M^\Lambda}=|\phi|$.
\end{lemma}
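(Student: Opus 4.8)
\textbf{Proof plan for the Truth Lemma (Lemma~\ref{lemma.truthlemma-mc}).}
The plan is to prove the equivalence $\M^\Lambda,s\vDash\phi\iff\phi\in s$ by induction on the structure of $\phi$. The Boolean cases ($p$, $\neg\phi$, $\phi\land\psi$) are entirely routine, using the maximality and consistency of $s$ together with the definitions $V^\Lambda(p)=|p|$, $|\neg\phi|=S^\Lambda\setminus|\phi|$ and $|\phi\land\psi|=|\phi|\cap|\psi|$. So the whole content lies in the modal case $\phi=\Delta\chi$, where the induction hypothesis gives us $\chi^{\M^\Lambda}=|\chi|$ and $(\neg\chi)^{\M^\Lambda}=S^\Lambda\setminus|\chi|$, and we must show $\M^\Lambda,s\vDash\Delta\chi\iff\Delta\chi\in s$.

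For the semantic side, by definition $\M^\Lambda,s\vDash\Delta\chi$ means $\chi^{\M^\Lambda}\in N^\Lambda(s)$ or $S^\Lambda\setminus\chi^{\M^\Lambda}\in N^\Lambda(s)$, i.e.\ $|\chi|\in N^\Lambda(s)$ or $|\neg\chi|\in N^\Lambda(s)$. By the definition of $N^\Lambda$ in Def.~\ref{def.canonicalmodel}, this is equivalent to: ($\Delta(\chi\vee\psi)\in s$ for every $\psi$) or ($\Delta(\neg\chi\vee\psi)\in s$ for every $\psi$). So what must be established is the equivalence
\[
\Delta\chi\in s\iff\big(\forall\psi\,\Delta(\chi\vee\psi)\in s\big)\ \text{or}\ \big(\forall\psi\,\Delta(\neg\chi\vee\psi)\in s\big).
\]
The right-to-left direction is the easy half: if $\Delta(\chi\vee\psi)\in s$ for every $\psi$, then instantiating $\psi:=\chi$ (or using $\chi\vee\chi\lra\chi$ with RE$\Delta$) gives $\Delta\chi\in s$; the other disjunct gives $\Delta\neg\chi\in s$, and then $\Delta$Equ yields $\Delta\chi\in s$. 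The left-to-right direction is where axiom $\Delta$M does the work: assume $\Delta\chi\in s$. For an arbitrary $\psi$, the instance $\Delta\chi\to\Delta(\chi\vee\psi)\vee\Delta(\neg\chi\vee\psi)$ of $\Delta$M (taking its $\phi,\psi,\chi$ to be our $\chi,\psi,\psi$) together with $\Delta\chi\in s$ and maximality gives $\Delta(\chi\vee\psi)\in s$ or $\Delta(\neg\chi\vee\psi)\in s$. The obstacle is that the disjunct chosen may depend on $\psi$, whereas we need \emph{one} disjunct that works for \emph{all} $\psi$ simultaneously.

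To clear this obstacle I would argue by cases on whether the first disjunct ever fails: either $\Delta(\chi\vee\psi)\in s$ for every $\psi$ --- in which case the first clause of the right-hand side holds and we are done --- or there is some fixed $\psi_0$ with $\Delta(\chi\vee\psi_0)\notin s$. In the latter case, fix this $\psi_0$; for every $\psi$ I claim $\Delta(\neg\chi\vee\psi)\in s$. Indeed, consider the $\Delta$M-instance with $\phi:=\chi$ and the two disjunct-parameters set to $\psi_0$ and $\psi$ respectively, namely $\Delta\chi\to\Delta(\chi\vee\psi_0)\vee\Delta(\neg\chi\vee\psi)$; since $\Delta\chi\in s$ and $\Delta(\chi\vee\psi_0)\notin s$, maximality forces $\Delta(\neg\chi\vee\psi)\in s$. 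As $\psi$ was arbitrary, the second clause of the right-hand side holds. This case split is exactly what the ``two-disjunct'' shape of $\Delta$M is designed for, and it completes the modal case and hence the induction. (Note this argument uses only $\Delta$M, $\Delta$Equ and RE$\Delta$, so it applies uniformly to every system extending ${\bf M^\Delta}$, as claimed; the extra axioms $\Delta$C, $\Delta$N are irrelevant to the truth lemma and enter only when verifying the corresponding frame properties of $N^\Lambda$.)
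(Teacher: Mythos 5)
Your proposal is correct and essentially the same as the paper's proof: the Boolean cases are routine, the right-to-left direction instantiates the defining condition of $N^\Lambda$ (plus RE$\Delta$ and $\Delta$Equ), and your case split on whether $\Delta(\chi\vee\psi)\in s$ for all $\psi$ is just the contrapositive form of the paper's argument by contradiction, both exploiting the two independent parameters in $\Delta$M (the paper takes witnesses $\psi,\chi$ where both disjuncts fail and contradicts $\Delta\phi\in s$; you fix the witness $\psi_0$ where the first fails and force the second for all $\psi$).
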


\begin{proof}
By induction on $\phi$. The base case and Boolean cases are straightforward by Def.~\ref{def.canonicalmodel} and induction hypothesis. The only nontrivial case is $\Delta\phi$.

%Suppose that $\Delta\phi\in s$, to show that $\M^c,s\vDash\Delta\phi$. By supposition and axiom $\text{Dis}\Delta$, $\Delta(\phi\vee\psi)\vee\Delta(\neg\phi\vee\chi)\in s$ for any $\psi$ and $\chi$. Thus $\Delta(\phi\vee\psi)\in s$ for any $\psi$, or $\Delta(\neg\phi\vee\chi)\in s$ for any $\chi$. The first case implies $|\phi|\in N^c(s)$, by induction hypothesis, $\phi^{\M^c}\in N^c(s)$; the second case implies $|\neg\phi|\in N^c(s)$, i.e. $S^c\backslash |\phi|\in N^c(s)$, then by induction hypothesis, $(\neg\phi)^{\M^c}=S^c\backslash \phi^{\M^c}\in N^c(s)$. Both entails $\M^c,s\vDash\Delta\phi$.

Suppose, for a contradiction, that $\Delta\phi\in s$ but $\M^\Lambda,s\nvDash\Delta\phi$. Then by induction hypothesis, we obtain $|\phi|\notin N^\Lambda(s)$, and $S^\Lambda\backslash|\phi|\notin N^\Lambda(s)$, i.e. $|\neg\phi|\notin N^\Lambda(s)$. Thus $\Delta(\phi\vee\psi)\notin s$ for some $\psi$, and $\Delta(\neg\phi\vee\chi)\notin s$ for some $\chi$. Using axiom $\Delta$M, we obtain $\Delta\phi\notin s$: a contradiction.

Conversely, assume that $\M^\Lambda,s\vDash\Delta\phi$, to show that $\Delta\phi\in s$. By assumption and induction hypothesis, we have $|\phi|\in N^\Lambda(s)$, or $S^\Lambda\backslash|\phi|\in N^\Lambda(s)$, i.e. $|\neg\phi|\in N^\Lambda(s)$. If $|\phi|\in N^\Lambda(s)$, then for every $\psi$, $\Delta(\phi\vee\psi)\in s$. In particular, $\Delta\phi\in s$; if $|\neg\phi|\in N^\Lambda(s)$, then by a similar argument, we obtain $\Delta\neg\phi\in s$, thus $\Delta\phi\in s$. Therefore, $\Delta\phi\in s$.
\end{proof}

\weg{Similar to the case of classical modal logics~\cite[Sec.~9.2]{Chellas:1980}, given a system $\Gamma$ extending ${\bf M^\Delta}$, there are various canonical neighborhood models for $\Gamma$, as long as $N^c(s)$ always consists of the set $\{|\phi|_\Gamma\mid\Delta(\phi\vee\psi)\in s\text{ for every }\psi\}$ together with a collection of non-proof sets relative to $\Gamma$, in symbol,
$$N^c(s)=\{|\phi|_\Gamma\mid \Delta(\phi\vee\psi)\in s\text{ for every }\psi\}\cup \mathfrak{X},$$
where $\mathfrak{X}\subseteq\{X\subseteq S^c\mid X\neq |\phi|_\Gamma\text{ for every }\phi\}$. In fact, similar to~\cite[Thm.~9.7]{Chellas:1980}, we have that
\begin{proposition}
Let $\Gamma$ be a system extending ${\bf M^\Delta}$. Then $\M^c=\lr{S^c,N^c,V^c}$ is a canonical neighborhood model for $\Gamma$ iff $S^c$ and $V^c$ are as in Def.~\ref{def.canonicalmodel}, and for each $s\in S^c$,
$$N^c(s)=\{|\phi|_\Gamma\mid \Delta(\phi\vee\psi)\in s\text{ for every }\psi\}\cup \mathfrak{X},$$
where $\mathfrak{X}$ is defined as above.
\end{proposition}}

%Note that there are many canonical neighborhood models for any system extending ${\bf M^\Delta}$,\footnote{The similar situation exists for any system of classical modal logics~\cite[Chap.~9]{Chellas:1980}.} where the smallest one $\M_{smallest}=\lr{S^c,N^c,V^c}$ is defined such that $N^c(s)=\{|\phi|\mid \Delta(\phi\vee\psi)\in s\text{ for every }\psi\}$.
Given a system $\Lambda$ extending ${\bf M^\Delta}$, the minimal canonical model for $\Lambda$, denoted $\M_0^\Lambda=\lr{S^\Lambda,N^\Lambda_0,V^\Lambda}$, is defined where $N^\Lambda_0(s)=\{|\phi|\mid \Delta(\phi\vee\psi)\in s\text{ for every }\psi\}$. Note that $\M_0^\Lambda$ is not necessarily supplemented. Thus we need to define a notion of supplementation, which comes from~\cite{Chellas:1980}.

\begin{definition}
Let $\M=\lr{S,N,V}$ be a neighborhood model. The {\em supplementation} of $\M$, denoted $\M^+$, is a triple $\lr{S,N^+,V}$, where for each $s\in S$, $N^+(s)$ is the superset closure of $N(s)$, i.e. for every $s\in S$,
$$N^+(s)=\{X\subseteq S\mid Y\subseteq X\text{ for some }Y\in N(s)\}.$$
Intuitively, $\M^+$ differs from $\M$ only in that $N^+(s)$ contains every proposition in $\M$ that includes any proposition in $N(s)$.
\end{definition}

It is easy to see that $\M^+$ is supplemented. Moreover, $N(s)\subseteq N^+(s)$. The proof below is a routine work.

\begin{proposition}\label{prop.i&n}
Let $\M$ be a neighborhood model. If $\M$ possesses the property $(c)$, then so does $\M^+$; if $\M$ possesses the property $(n)$, then so does $\M^+$.
\end{proposition}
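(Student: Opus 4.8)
The plan is to verify the two conditional implications separately by unpacking the definition of the supplementation $\M^+$ and the relevant frame properties from Definition~\ref{def.properties}. Throughout, fix a neighborhood model $\M=\lr{S,N,V}$ and recall that $N^+(s)=\{X\subseteq S\mid Y\subseteq X\text{ for some }Y\in N(s)\}$.

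For the property $(c)$: assume $\M$ is closed under intersections, and let $s\in S$ and $X_1,X_2\in N^+(s)$; I want $X_1\cap X_2\in N^+(s)$. By definition of $N^+$, there are $Y_1,Y_2\in N(s)$ with $Y_1\subseteq X_1$ and $Y_2\subseteq X_2$. Since $\M$ has $(c)$, we get $Y_1\cap Y_2\in N(s)$. Now $Y_1\cap Y_2\subseteq X_1\cap X_2$, so by definition of $N^+$ again, $X_1\cap X_2\in N^+(s)$, as required.

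For the property $(n)$: assume $S\in N(s)$ for each $s\in S$. Fix $s\in S$. Since $S\in N(s)$ and $S\subseteq S$, the witness $Y=S$ shows $S\in N^+(s)$. Hence $\M^+$ has $(n)$. (One could even note more simply that $N(s)\subseteq N^+(s)$, which was already observed just above, so any set in $N(s)$ — in particular $S$ — remains in $N^+(s)$.)

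There is no real obstacle here; the statement is exactly the kind of ``routine work'' the text flags it as. The only thing to be slightly careful about is that the intersection-closure argument genuinely needs $(c)$ applied to the \emph{witnesses} $Y_1,Y_2$ rather than to $X_1,X_2$ directly — the $X_i$ need not lie in $N(s)$ — but monotonicity of $\cap$ under $\subseteq$ makes this immediate. It is also worth observing in passing (though not needed for this proposition) that the analogue for $(z)$ fails in general, which is part of why $(z)$ has to be built in separately rather than obtained by supplementation.
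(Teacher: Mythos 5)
Your proof is correct and follows exactly the routine argument the paper has in mind (and omits as ``routine work''): apply $(c)$ to the witnesses $Y_1,Y_2\in N(s)$ and use $Y_1\cap Y_2\subseteq X_1\cap X_2$, and for $(n)$ simply use $N(s)\subseteq N^+(s)$. Nothing further is needed.
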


We will denote the supplementation of $\M_0^\Lambda$ by $(\M_0^\Lambda)^+=\lr{S^\Lambda,(N^\Lambda_0)^+,V^\Lambda}$. By definition of $(N^\Lambda_0)^+$, $(\M_0^\Lambda)^+$ is an $(m)$-model. To demonstrate the completeness of any system extending ${\bf M^\Delta}$ with respect to the class of $(m)$-frames, we need only to show that $(\M_{0}^\Lambda)^+$ is a canonical neighborhood model for any system extending ${\bf M^\Delta}$. %\footnote{Here we abuse of notation and use $(\M^c)^+$ uniformly for any system extending ${\bf M^\Delta}$. It should be clear which system $S^c$ is defined on, thus there should be no confusion.}
That is,
\begin{lemma}\label{lem.canonicalmodel} For each $s\in S^\Lambda$,
$$|\phi|\in (N^\Lambda_0)^+(s)\iff \Delta(\phi\vee\psi)\in s\text{ for every }\psi.$$
\end{lemma}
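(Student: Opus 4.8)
The plan is to prove the biconditional in Lemma~\ref{lem.canonicalmodel} by proving the two directions separately, with the right-to-left direction being essentially the definition of $N_0^\Lambda$ combined with the inclusion $N_0^\Lambda(s) \subseteq (N_0^\Lambda)^+(s)$, and the left-to-right direction being the substantive part. For the easy direction: if $\Delta(\phi\vee\psi)\in s$ for every $\psi$, then by definition $|\phi|\in N_0^\Lambda(s)$, and since $N_0^\Lambda(s)\subseteq (N_0^\Lambda)^+(s)$ (noted just before the lemma), we get $|\phi|\in (N_0^\Lambda)^+(s)$.

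For the hard direction, suppose $|\phi|\in (N_0^\Lambda)^+(s)$. By definition of supplementation, there is some $Y\in N_0^\Lambda(s)$ with $Y\subseteq |\phi|$. By definition of $N_0^\Lambda$, $Y=|\chi|$ for some formula $\chi$ with $\Delta(\chi\vee\theta)\in s$ for every $\theta$; in particular (taking $\theta$ to be, say, $\chi$ itself, or using RE$\Delta$) $\Delta\chi\in s$. From $|\chi|\subseteq|\phi|$ we get $\vdash \chi\to\phi$ (since $|\chi\to\phi| = S^\Lambda$, i.e. $\chi\to\phi$ is in every maximal consistent set, hence a theorem). The key step is then to show that $\Delta\chi \in s$ together with $\vdash\chi\to\phi$ yields $\Delta(\phi\vee\psi)\in s$ for every $\psi$. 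To see this, fix an arbitrary $\psi$. From $\vdash\chi\to\phi$ we get $\vdash \chi\to(\phi\vee\psi)$ and $\vdash \chi\vee(\phi\vee\psi)\lra(\phi\vee\psi)$, so by RE$\Delta$ it suffices to show $\Delta(\chi\vee(\phi\vee\psi))\in s$. Now apply axiom $\Delta$M with the instance $\Delta\chi\to\Delta(\chi\vee(\phi\vee\psi))\vee\Delta(\neg\chi\vee\rho)$ for a suitably chosen $\rho$: choosing $\rho = \chi\vee(\phi\vee\psi)$ makes $\neg\chi\vee\rho$ equal (up to provable equivalence) to $\chi\vee(\phi\vee\psi)$ as well, actually simpler — choosing $\rho$ so that $\neg\chi\vee\rho$ is provably equivalent to the same disjunct, so both disjuncts of the $\Delta$M conclusion become $\Delta(\chi\vee(\phi\vee\psi))$, forcing $\Delta(\chi\vee(\phi\vee\psi))\in s$ from $\Delta\chi\in s$.

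The main obstacle — and the step I would think hardest about — is exactly this last deduction: extracting $\Delta(\chi\vee(\phi\vee\psi))\in s$ from $\Delta\chi\in s$ using $\Delta$M. The trick is that $\Delta$M, $\Delta\chi\to\Delta(\chi\vee\alpha)\vee\Delta(\neg\chi\vee\beta)$, collapses to the desired $\Delta$-monotonicity principle $\Delta\chi\to\Delta(\chi\vee\alpha)$ whenever we can force the second disjunct to coincide with the first; instantiating $\beta$ appropriately (e.g. $\beta = \alpha\vee\neg\chi$, making $\neg\chi\vee\beta$ provably equivalent to $\neg\chi\vee\alpha$, then observing that in the relevant case $\neg\chi\vee\alpha$ and $\chi\vee\alpha$ need care) — more cleanly, one observes that $\mathbf{M^\Delta}$ proves the rule $\dfrac{\phi\to\psi}{\Delta\phi\to\Delta\psi}$ (derivable from $\Delta$M, as the paper remarks in connection with $\mathrm{s}\Delta\mathrm{M}$), and then from $\vdash\chi\to(\phi\vee\psi)$ we directly get $\vdash\Delta\chi\to\Delta(\phi\vee\psi)$, whence $\Delta(\phi\vee\psi)\in s$. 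I would present the argument via this derived monotonicity rule, stating explicitly that it follows from $\Delta$M in $\mathbf{M^\Delta}$, and then the whole left-to-right direction becomes short. Finally I would note that, combined with Lemma~\ref{lemma.truthlemma-mc}, this shows $(\M_0^\Lambda)^+$ is indeed a canonical model for any system extending $\mathbf{M^\Delta}$, yielding strong completeness of $\mathbf{M^\Delta}$ with respect to $(m)$-frames.
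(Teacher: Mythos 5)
Your right-to-left direction is fine and matches the paper. But the left-to-right direction has a genuine gap, located exactly where you flagged the difficulty: the step from $\Delta\chi\in s$ and $\vdash\chi\to\phi$ to $\Delta(\phi\vee\psi)\in s$. The rule of $\Delta$-monotony $\dfrac{\phi\to\psi}{\Delta\phi\to\Delta\psi}$ is \emph{not} derivable in ${\bf M^\Delta}$: the paper's remark you invoke says this rule is interderivable with s$\Delta$M, i.e.\ $\Delta\phi\to\Delta(\phi\vee\psi)$, which is strictly stronger than $\Delta$M; and by Prop.~\ref{prop.validinvalid}(viii), s$\Delta$M fails on $(m)$-frames, so by soundness of ${\bf M^\Delta}$ with respect to $(m)$-frames it cannot be a theorem, hence the rule cannot be derived. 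Your attempted direct instantiation of $\Delta$M also cannot be repaired: the second disjunct of $\Delta$M always has the shape $\Delta(\neg\chi\vee\rho)$, and $\neg\chi\vee\rho$ has $\neg\chi$ as a disjunct, so no choice of $\rho$ makes it provably equivalent to $\chi\vee\phi\vee\psi$ in general; semantically, $\Delta\chi$ may hold at $s$ because the complement of $\chi$'s truth set is a neighborhood, in which case monotonicity for $\Delta$ simply fails.

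The fix is not to weaken the hypothesis you get from $Y=|\chi|\in N^\Lambda_0(s)$. You are given $\Delta(\chi\vee\theta)\in s$ for \emph{every} $\theta$, but you discard this and keep only $\Delta\chi\in s$. Instead, for a fixed $\psi$ instantiate $\theta:=\phi\vee\psi$, obtaining $\Delta(\chi\vee\phi\vee\psi)\in s$ directly. Since $|\chi|\subseteq|\phi|$ gives $\vdash\chi\to\phi$, we have $\vdash\chi\vee\phi\vee\psi\lra\phi\vee\psi$, so RE$\Delta$ yields $\vdash\Delta(\chi\vee\phi\vee\psi)\lra\Delta(\phi\vee\psi)$ and hence $\Delta(\phi\vee\psi)\in s$. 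This is the paper's argument; note that it uses only RE$\Delta$ and the quantification over $\theta$ built into the definition of $N^\Lambda_0$ --- axiom $\Delta$M plays no role in this lemma (it is needed for the truth lemma, Lemma~\ref{lemma.truthlemma-mc}, not here).
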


\begin{proof}
`$\Longleftarrow$': immediate by $N^\Lambda_0(s)\subseteq (N^\Lambda_0)^+(s)$ for each $s\in S^\Lambda$ and the definition of $N^\Lambda_0$.

`$\Longrightarrow$': Suppose that $|\phi|\in (N^\Lambda_0)^+(s)$, to show that $\Delta(\phi\vee\psi)\in s\text{ for every }\psi$. By supposition, $X\subseteq |\phi|$ for some $X\in N^\Lambda_0(s)$. Then there is a $\chi$ such that $|\chi|=X$, and thus $\Delta(\chi\vee\psi)\in s$ for every $\psi$, in particular $\Delta(\chi\vee\phi\vee\psi)\in s$. From $|\chi|\subseteq |\phi|$ follows that $\vdash\chi\to\phi$, thus $\vdash\chi\vee\phi\vee\psi\lra\phi\vee\psi$, and hence $\vdash\Delta(\chi\vee\phi\vee\psi)\lra\Delta(\phi\vee\psi)$ by RE$\Delta$. Therefore $\Delta(\phi\vee\psi)\in s$ for every $\psi$.
\end{proof}

\weg{\begin{lemma}
For all $\phi\in \mathcal{L}(\Delta)$, for all $s\in S^c$, we have
$\M^+,s\vDash\phi\iff \phi\in s,$ i.e. $\phi^{\M^+}=|\phi|$.
\end{lemma}

\begin{proof}
By induction on $\phi$. The nontrivial case is $\Delta\phi$.

Suppose that $\Delta\phi\in s$, to show that $\M^+,s\vDash\Delta\phi$. By supposition and axiom $\text{Dis}\Delta$, $\Delta(\phi\vee\psi)\vee\Delta(\neg\phi\vee\chi)\in s$ for arbitrary $\psi,\chi$. Thus $\Delta(\phi\vee\psi)\in s$ for all $\psi$, or $\Delta(\neg\phi\vee\chi)\in s$ for all $\chi$. The first case implies $|\phi|\in N^c(s)$, by induction hypothesis, $\phi^{\M^+}\in N^c(s)$, thus $\phi^{\M^+}\in N^+(s)$; similarly, the second case implies $(\neg\phi)^{\M^+}\in N^+(s)$. Both entails $\M^+,s\vDash\Delta\phi$.

Conversely, assume that $\M^+,s\vDash\Delta\phi$, then by induction hypothesis, $|\phi|\in N^+(s)$ or $|\neg\phi|\in N^+(s)$. If $|\phi|\in N^+(s)$, then $X\subseteq |\phi|$ for some $X\in N^c(s)$, and hence for some $\psi$, $|\psi|=X\in N^c(s)$, i.e. $\Delta(\psi\vee\psi')\in s$ for all $\psi'$. By $|\psi|\subseteq \phi$, we have $\vdash\psi\to\phi$, then $\vdash\psi\vee\phi\lra\phi$, by RE$\Delta$, $\vdash\Delta(\psi\vee\phi)\lra\Delta\phi$. Since $\Delta(\psi\vee\phi)\in s$, we conclude that $\Delta\phi\in s$. Similarly, from $|\neg\phi|\in N^+(s)$, it can follow that $\Delta\neg\phi\in s$, and therefore $\Delta\phi\in s$, as desired.
\end{proof}}

By Lemma~\ref{lemma.truthlemma-mc} and Lemma~\ref{lem.canonicalmodel}, we have

\begin{lemma}\label{lem.cm}
For all $\phi\in \mathcal{L}(\Delta)$, for all $s\in S^\Lambda$, we have
$(\M_0^\Lambda)^+,s\vDash\phi\iff \phi\in s,$ i.e. $\phi^{(\M_0^\Lambda)^+}=|\phi|$.
\end{lemma}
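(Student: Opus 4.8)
The plan is to observe that Lemma~\ref{lem.cm} is essentially a corollary of the two preceding lemmas, obtained by checking that $(\M_0^\Lambda)^+$ meets the conditions of Definition~\ref{def.canonicalmodel}. First I would note that, by construction, $(\M_0^\Lambda)^+=\lr{S^\Lambda,(N_0^\Lambda)^+,V^\Lambda}$ already has exactly the state set $S^\Lambda$ of maximal consistent sets for $\Lambda$ and the valuation $V^\Lambda(p)=|p|$ demanded by Definition~\ref{def.canonicalmodel}. Second, I would invoke Lemma~\ref{lem.canonicalmodel}, which says precisely that for each $s\in S^\Lambda$ we have $|\phi|\in (N_0^\Lambda)^+(s)$ iff $\Delta(\phi\vee\psi)\in s$ for every $\psi$; that is, $(N_0^\Lambda)^+$ satisfies the membership clause of Definition~\ref{def.canonicalmodel}. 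It is worth stressing here that Definition~\ref{def.canonicalmodel} only constrains $N^\Lambda(s)$ on sets of the form $|\phi|$, so the additional supersets introduced by supplementation (which need not be proof sets) do no harm — this is the point already made in the remark following that definition. Hence $(\M_0^\Lambda)^+$ is a canonical neighborhood model for $\Lambda$ in the sense of Definition~\ref{def.canonicalmodel}.

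Third, since $\Lambda$ extends ${\bf M^\Delta}$, Lemma~\ref{lemma.truthlemma-mc} applies to \emph{any} canonical model for $\Lambda$, and in particular to $(\M_0^\Lambda)^+$. It therefore yields $(\M_0^\Lambda)^+,s\vDash\phi \iff \phi\in s$ for all $\phi\in\mathcal{L}(\Delta)$ and all $s\in S^\Lambda$, which is exactly the assertion $\phi^{(\M_0^\Lambda)^+}=|\phi|$. That completes the argument.

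Since both halves of the substantive work have already been done — the truth lemma for arbitrary canonical models in Lemma~\ref{lemma.truthlemma-mc}, and the verification that the supplemented minimal model is such a model in Lemma~\ref{lem.canonicalmodel} — there is no real obstacle at this stage; the only care needed is bookkeeping, namely confirming that $(\M_0^\Lambda)^+$ genuinely instantiates Definition~\ref{def.canonicalmodel} (state set, valuation, and the $|\phi|$-membership biconditional all agree) and that the standing hypothesis ``$\Lambda$ extends ${\bf M^\Delta}$'' is in force so that Lemma~\ref{lemma.truthlemma-mc} may be applied. The one genuinely delicate point — that a superset of a proof set which itself happens to be some proof set $|\phi|$ still forces $\Delta(\phi\vee\psi)\in s$ for every $\psi$ — was precisely the `$\Longrightarrow$' direction already handled in Lemma~\ref{lem.canonicalmodel}, so with that in hand the present lemma is immediate.
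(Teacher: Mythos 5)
Your proposal is correct and follows exactly the paper's route: the paper derives Lemma~\ref{lem.cm} directly by combining Lemma~\ref{lemma.truthlemma-mc} (the truth lemma for arbitrary canonical models of systems extending ${\bf M^\Delta}$) with Lemma~\ref{lem.canonicalmodel} (which shows $(\M_0^\Lambda)^+$ satisfies the defining biconditional of Definition~\ref{def.canonicalmodel}). Your additional bookkeeping about the state set, valuation, and the harmlessness of non-proof-set neighborhoods is exactly the implicit content of that combination.
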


With a routine work, we obtain
\begin{theorem}\label{thm.comp-m}
${\bf M^\Delta}$ is strongly complete with respect to the class of $(m)$-frames.
\end{theorem}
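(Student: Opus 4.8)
The plan is to assemble the completeness of ${\bf M^\Delta}$ from the pieces already in place, following the usual canonical-model recipe. First I would fix $\Lambda={\bf M^\Delta}$ and observe that soundness is already given by the Soundness Corollary (via Proposition~\ref{prop.validinvalid}(ii)), so only strong completeness needs attention. For that, suppose $\Gamma\nvdash\phi$ for some set $\Gamma\cup\{\phi\}\subseteq\mathcal{L}(\Delta)$; by a Lindenbaum argument extend $\Gamma\cup\{\neg\phi\}$ to a maximal consistent set $s\in S^\Lambda$. Then I would invoke the model $(\M_0^\Lambda)^+$, which by the remark after Proposition~\ref{prop.i&n} is an $(m)$-model, and apply Lemma~\ref{lem.cm} to conclude $(\M_0^\Lambda)^+,s\vDash\gamma$ for every $\gamma\in\Gamma$ while $(\M_0^\Lambda)^+,s\nvDash\phi$. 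Since $(\M_0^\Lambda)^+$ is based on an $(m)$-frame, this witnesses that $\phi$ is not a semantic consequence of $\Gamma$ over the class of $(m)$-frames, establishing strong completeness.

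The key steps, in order, are therefore: (1) note soundness is already established; (2) carry out the standard Lindenbaum extension of a $\Lambda$-consistent set $\Gamma\cup\{\neg\phi\}$ to a point $s$ of $S^\Lambda$; (3) recall that $(\M_0^\Lambda)^+$ is a canonical model for $\Lambda$ — this is exactly the content of Lemma~\ref{lem.canonicalmodel} together with Lemma~\ref{lemma.truthlemma-mc}, packaged as Lemma~\ref{lem.cm} — and that it satisfies property $(m)$ by construction of the supplementation; (4) read off $(\M_0^\Lambda)^+,s\vDash\Gamma$ and $(\M_0^\Lambda)^+,s\nvDash\phi$ from the Truth Lemma (Lemma~\ref{lem.cm}); (5) conclude. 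All the substantive work — the well-definedness of $N^\Lambda$, the truth lemma for the $\Delta$-case using axiom $\Delta$M, and the verification that supplementation preserves the canonical-model property — has been discharged in the preceding lemmas, so this proof is genuinely a short assembly.

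I do not expect a real obstacle here; the phrase ``with a routine work'' in the statement signals exactly this. The only point requiring a line of care is making sure the quantifiers in the definition of strong completeness line up: one must check that the single model $(\M_0^\Lambda)^+$ simultaneously satisfies all of $\Gamma$ and refutes $\phi$ at the same state $s$, which is immediate from Lemma~\ref{lem.cm} since $\gamma\in s$ for all $\gamma\in\Gamma$ and $\phi\notin s$. If anything is mildly delicate, it is simply restating that $(\M_0^\Lambda)^+$ is indeed an $(m)$-frame (true by the definition of $N^+$ as a superset closure) so that the countermodel lies in the intended frame class. Hence the proof is essentially: take a consistent $\Gamma\cup\{\neg\phi\}$, Lindenbaum it, and apply Lemma~\ref{lem.cm} in the $(m)$-model $(\M_0^\Lambda)^+$.
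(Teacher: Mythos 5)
Your proposal is correct and matches the paper's intended argument: the paper dismisses this step as ``routine work'' precisely because, as you say, Lemma~\ref{lem.cm} together with the fact that $(\M_0^\Lambda)^+$ is an $(m)$-model (by construction of the supplementation) immediately yields the countermodel after a standard Lindenbaum extension. Your assembly of these pieces is exactly the proof the paper has in mind.
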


We are now in a position to deal with the strong completeness of ${\bf R^\Delta}$. %First, the soundness follows from the following result.
%However, it is indeed valid on the class of quasi-filters.
\weg{\begin{proposition}
$\Delta$C is valid on the class of quasi-filters.\footnote{It is worth remarking that in the case of $(i)$-frames, we need the property $(c)$ to provide the validity of $\Delta$C (see the proof of Prop.~\ref{thm.comp-EC}), by comparison, in the case of quasi-filters, we do not need $(c)$, since the validity of $\Delta$C is now guaranteed by $(i)$ and $(s)$ together.}
\end{proposition}

\begin{proof}
Let $\M=\lr{S,N,V}$ be a quasi-filter model and $s\in S$. Suppose that $\M,s\vDash\Delta\phi\land\Delta\psi$, then $\phi^\M\in N(s)$ or $(\neg\phi)^\M\in N(s)$, and $\psi^\M\in N(s)$ or $(\neg\psi)^\M\in N(s)$. Consider the following three cases:
\begin{itemize}
\item $\phi^\M\in N(s)$ and $\psi^\M\in N(s)$. By $(i)$, we obtain $\phi^\M\cap \psi^\M\in N(s)$, i.e. $(\phi\land\psi)^\M\in N(s)$, which gives $\M,s\vDash\Delta(\phi\land\psi)$.
\item $(\neg\phi)^\M\in N(s)$. By $(s)$, we infer $(\neg\phi)^\M\cup(\neg\psi)^\M\in N(s)$, i.e. $(\neg(\phi\land\psi))^\M\in N(s)$, which implies $\M,s\vDash\Delta(\phi\land\psi)$.
\item $(\neg\psi)^\M\in N(s)$. Similar to the second case, we can derive that $\M,s\vDash\Delta(\phi\land\psi)$.
\end{itemize}
\end{proof}}

\begin{proposition}\label{prop.i} %Let $\M^c$ be a canonical neighborhood model for ${\bf R^\Delta}$. Then $\M^c$ possesses the property $(i)$. As a corollary, $(\M^c)^+$ also possesses the property $(i)$.
For any system $\Lambda$ extending ${\bf R^\Delta}$, the minimal canonical model $\M^\Lambda_0$ has the property $(c)$. Hence, its supplementation $(\M^\Lambda_0)^+$ is an $(mc)$-model.% The model $\M_0$ possesses the property $(i)$. As a corollary, $\M_0^+$ also possesses the property $(i)$.
\end{proposition}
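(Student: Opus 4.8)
The plan is to verify directly that the minimal canonical function $N_0^\Lambda$ is closed under intersections, using axiom $\Delta$C together with RE$\Delta$; the supplementation claim then drops out of Proposition~\ref{prop.i&n}.

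Concretely, I would fix $s\in S^\Lambda$ and suppose $X,Y\in N_0^\Lambda(s)$. By the definition of $N_0^\Lambda$ there are formulas $\phi,\chi$ with $X=|\phi|$, $Y=|\chi|$, such that $\Delta(\phi\vee\psi)\in s$ for every $\psi$ and $\Delta(\chi\vee\psi)\in s$ for every $\psi$. The goal is $X\cap Y=|\phi\wedge\chi|\in N_0^\Lambda(s)$, i.e. $\Delta((\phi\wedge\chi)\vee\psi)\in s$ for every $\psi$. So fix an arbitrary $\psi$. The two hypotheses, instantiated at this same $\psi$, give $\Delta(\phi\vee\psi)\in s$ and $\Delta(\chi\vee\psi)\in s$, whence by axiom $\Delta$C (and MP, closure of $s$ under modus ponens) $\Delta((\phi\vee\psi)\wedge(\chi\vee\psi))\in s$. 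Since $(\phi\vee\psi)\wedge(\chi\vee\psi)$ and $(\phi\wedge\chi)\vee\psi$ are propositionally interderivable, TAUT gives $\vdash(\phi\vee\psi)\wedge(\chi\vee\psi)\lra(\phi\wedge\chi)\vee\psi$, and RE$\Delta$ then yields $\vdash\Delta((\phi\vee\psi)\wedge(\chi\vee\psi))\lra\Delta((\phi\wedge\chi)\vee\psi)$; as $s$ is maximal consistent it contains all theorems and is closed under MP, so $\Delta((\phi\wedge\chi)\vee\psi)\in s$. Since $\psi$ was arbitrary, $|\phi\wedge\chi|\in N_0^\Lambda(s)$, and recalling $|\phi\wedge\chi|=|\phi|\cap|\chi|=X\cap Y$ this establishes $(c)$ for $\M_0^\Lambda$.

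For the second sentence I would simply assemble existing facts: it has already been observed (just before Lemma~\ref{lem.canonicalmodel}) that $(\M_0^\Lambda)^+$ is an $(m)$-model, and Proposition~\ref{prop.i&n} states that the supplementation of a $(c)$-model is again a $(c)$-model. Combining this with the first part, $(\M_0^\Lambda)^+$ possesses both $(m)$ and $(c)$, i.e. it is an $(mc)$-model.

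I do not anticipate a genuine obstacle here; the only subtlety is the bookkeeping over the universally quantified ``for every $\psi$'': one must feed a single fixed witness $\psi$ into \emph{both} hypotheses, so that the conjunction produced by $\Delta$C is $(\phi\vee\psi)\wedge(\chi\vee\psi)$, which matches $(\phi\wedge\chi)\vee\psi$ only after a propositional rewrite justified by RE$\Delta$. Note that neither $(z)$ nor monotonicity of the model is used for closure under intersections itself; the property $(m)$ re-enters only through Proposition~\ref{prop.i&n} when passing to the supplementation.
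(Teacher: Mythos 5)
Your proof is correct and follows essentially the same route as the paper: instantiate both ``for every $\psi$'' hypotheses at a common $\psi$, apply $\Delta$C, and conclude $(c)$, with the supplementation claim delegated to Proposition~\ref{prop.i&n}. The only difference is that you make explicit the propositional rewrite $(\phi\vee\psi)\wedge(\chi\vee\psi)\lra(\phi\wedge\chi)\vee\psi$ via TAUT and RE$\Delta$, a step the paper compresses into ``Using axiom $\Delta$C, we infer $\Delta((\phi\land\chi)\vee\psi)\in s$''.
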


\begin{proof}
Suppose $X\in N^\Lambda_0(s)$ and $Y\in N^\Lambda_0(s)$, to show that $X\cap Y\in N^\Lambda_0(s)$. By supposition, there exist $\phi$ and $\chi$ such that $X=|\phi|$ and $Y=|\chi|$, and then $\Delta(\phi\vee\psi)\in s$ for every $\psi$, and $\Delta(\chi\vee\psi)\in s$ for every $\psi$. Using axiom $\Delta$C, we infer $\Delta((\phi\land\chi)\vee\psi)\in s$ for every $\psi$. Therefore, $|\phi\land\chi|\in N^\Lambda_0(s)$, i.e. $X\cap Y\in N^\Lambda_0(s)$. Thus $\M^\Lambda_0$ has the property $(c)$. Then it follows that $(\M^\Lambda_0)^+$ also possesses the property $(mc)$ from Prop.~\ref{prop.i&n}.
\end{proof}

The results below are now immediate, due to Lemma~\ref{lem.cm} and Prop.~\ref{prop.i}.
\begin{theorem}\label{thm.comp-r}
${\bf R^\Delta}$ is strongly complete with respect to the class of quasi-filters.
\end{theorem}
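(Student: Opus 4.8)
The plan is to instantiate the canonical-model machinery developed in this section at $\Lambda={\bf R^\Delta}$ and then run the standard Lindenbaum-plus-truth-lemma argument. First I would record two facts that are already available: by Proposition~\ref{prop.i}, the supplemented minimal canonical model $(\M_0^\Lambda)^+$ for $\Lambda={\bf R^\Delta}$ has properties $(m)$ and $(c)$, hence is based on a quasi-filter frame; and by Lemma~\ref{lem.cm} it satisfies the truth lemma $\phi^{(\M_0^\Lambda)^+}=|\phi|$ for every $\phi\in\mathcal{L}(\Delta)$, i.e.\ $(\M_0^\Lambda)^+,s\vDash\phi$ iff $\phi\in s$ for every $s\in S^\Lambda$.

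Next, for the completeness direction, suppose $\Gamma\nvdash_{{\bf R^\Delta}}\phi$ for some $\Gamma\subseteq\mathcal{L}(\Delta)$ and some formula $\phi$. Then $\Gamma\cup\{\neg\phi\}$ is ${\bf R^\Delta}$-consistent, so by Lindenbaum's lemma it extends to a maximal consistent set $s\in S^\Lambda$. By the truth lemma, $(\M_0^\Lambda)^+,s\vDash\gamma$ for every $\gamma\in\Gamma$ while $(\M_0^\Lambda)^+,s\nvDash\phi$. Since $(\M_0^\Lambda)^+$ is a quasi-filter model, this witnesses $\Gamma\nvDash\phi$ over the class of quasi-filters. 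Contraposing gives that quasi-filter consequence from $\Gamma$ implies ${\bf R^\Delta}$-derivability from $\Gamma$; together with the soundness inclusion ${\bf R^\Delta}\subseteq\text{Th}_\Delta(\mathbb{F}_{mc})$ from the Corollary (which rests on Prop.~\ref{prop.validinvalid}(ii) and (iv)), we conclude that ${\bf R^\Delta}$ is sound and strongly complete with respect to the class of quasi-filters, and in fact ${\bf R^\Delta}=\text{Th}_\Delta(\mathbb{F}_{mc})$.

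I do not expect any genuine obstacle: the substantive work — verifying that $N_0^\Lambda$ is closed under intersections (Proposition~\ref{prop.i}, via axiom $\Delta$C) and that passing to the supplementation preserves this closure while still validating the truth lemma (Proposition~\ref{prop.i&n} and Lemma~\ref{lem.cm}) — has already been carried out above. The only point I would flag explicitly is that supplementation is precisely what secures property $(m)$, and Proposition~\ref{prop.i&n} is what guarantees $(c)$ is not destroyed in the process, so that the canonical frame genuinely lands in the class of quasi-filters $(mc)$ rather than merely in $\mathbb{F}_m$ or $\mathbb{F}_c$ separately. Once that is noted, the remainder is the routine assembly of the canonical-model completeness proof.
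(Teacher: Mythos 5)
Your proposal is correct and follows exactly the paper's route: the paper derives this theorem as immediate from Proposition~\ref{prop.i} (which gives that $(\M_0^\Lambda)^+$ is an $(mc)$-model) together with the truth lemma of Lemma~\ref{lem.cm}, leaving the Lindenbaum-style assembly you spell out as ``routine work''. Nothing is missing; you have merely made explicit the standard steps the paper suppresses.
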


%Due to Prop.~\ref{prop.i} and Prop.~\ref{prop.i&n}, we have
%\begin{proposition}\label{prop.i-mc+}
%$(\M^c)^+$ possesses the property $(i)$ and $(n)$.
%\end{proposition}

\weg{\begin{proposition}
On regular frames, $\nabla\psi\to(\Box\phi\lra\Delta\phi\land\Delta(\psi\to\phi))$ is valid.
\end{proposition}

\begin{proof}
Let $\M$ is a regular model and $s\in\M$. Suppose $\M,s\vDash\nabla\psi$. Then $\phi^\M\notin N(s)$ and $(\neg\psi)^\M\notin N(s)$. We need to show that $\M,s\vDash\Box\phi\lra\Delta\phi\land\Delta(\psi\to\phi)$.

First, assume that $\M,s\vDash\Box\phi$, then $\phi^\M\in N(s)$. Thus $\M,s\vDash\Delta\phi$. Since $\phi^\M\subseteq ((\neg\psi)^\M\cup\phi^\M)=(\psi\to\phi)^\M$, applying the property $(s)$ gives us $(\psi\to\phi)^\M\in N(s)$, and hence $\M,s\vDash\Delta(\psi\to\phi)$.

Conversely, assume that $\M,s\vDash\Delta\phi\land\Delta(\psi\to\phi)$, to show that $\M,s\vDash\Box\phi$, viz. $\phi^\M\in N(s)$. Since $\M,s\vDash\Delta(\psi\to\phi)$, we have $(\psi\to\phi)^\M\in N(s)$ or $(\neg(\psi\to\phi))^\M\in N(s)$. If it is the case that $(\neg(\psi\to\phi))^\M\in N(s)$, i.e. $\psi^\M\cap (\neg\phi)^\M\in N(s)$, the property $(s)$ will lead to $\psi^\M\in N(s)$, contrary to the supposition, therefore it must be the case that $(\psi\to\phi)^\M\in N(s)$. From $\M,s\vDash\Delta\phi$, it follows that $\phi^\M\in N(s)$ or $(\neg\phi)^\M\in N(s)$. It suffices to derive a contradiction from $(\neg\phi)^\M\notin N(s)$.

If $(\neg\phi)^\M\in N(s)$, since $(\neg\psi)^\M\cup\phi^\M=(\psi\to\phi)^\M\in N(s)$, applying $(i)$ gives $(\neg\phi)^\M\cap (\neg\psi)^\M\in N(s)$, then applying $(s)$ produces $(\neg\psi)^\M\in N(s)$, once again contrary to the supposition, as desired.
\end{proof}}

%Define a canonical model for ${\bf }$
%One may easily show that

Now for ${\bf EMN^\Delta}$.
\begin{proposition}\label{prop.n-pres} For any system $\Lambda$ extending ${\bf EMN^\Delta}$, the minimal canonical model $\M^\Lambda_0$ has the property $(n)$. Hence, its supplementation $(\M^\Lambda_0)^+$ is an $(mn)$-model.
%$\M_0$ possesses the property $(n)$. As a corollary, $\M_0^+$ also possesses $(n)$.
\end{proposition}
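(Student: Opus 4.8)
The plan is to show that the minimal canonical model $\M^\Lambda_0$ of any $\Lambda \supseteq {\bf EMN^\Delta}$ has property $(n)$, i.e.\ $S^\Lambda \in N^\Lambda_0(s)$ for every $s \in S^\Lambda$, and then invoke Prop.~\ref{prop.i&n} to transfer $(n)$ to the supplementation $(\M^\Lambda_0)^+$, which is already an $(m)$-model by construction. So the content of the proof is entirely in establishing $(n)$ for $\M^\Lambda_0$.

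First I would observe that $S^\Lambda = |\top|$, so it suffices to show $|\top| \in N^\Lambda_0(s)$, which by the definition of $N^\Lambda_0$ amounts to showing that $\Delta(\top \vee \psi) \in s$ for every $\psi$. Since $\vdash \top \vee \psi \lra \top$ for each $\psi$, rule RE$\Delta$ gives $\vdash \Delta(\top \vee \psi) \lra \Delta\top$, so the task reduces to showing $\Delta\top \in s$ for every $s \in S^\Lambda$. This is immediate from axiom $\Delta$N: since ${\bf EMN^\Delta}$ contains $\Delta$N we have $\vdash \Delta\top$, hence $\Delta\top$ belongs to every maximal consistent set $s$. Therefore $\Delta(\top \vee \psi) \in s$ for every $\psi$, so $|\top| = S^\Lambda \in N^\Lambda_0(s)$, establishing $(n)$ for $\M^\Lambda_0$.

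Having established that $\M^\Lambda_0$ possesses $(n)$, Prop.~\ref{prop.i&n} yields that $(\M^\Lambda_0)^+$ also possesses $(n)$; and since $(\M^\Lambda_0)^+$ is an $(m)$-model by definition of supplementation (as already noted before Lemma~\ref{lem.canonicalmodel}), it is an $(mn)$-model, as claimed. This argument is completely parallel to the proof of Prop.~\ref{prop.i} for ${\bf R^\Delta}$, with axiom $\Delta$N (and the trivial equivalence $\top \vee \psi \lra \top$) playing the role that axiom $\Delta$C played there. I do not anticipate any real obstacle: the only point requiring the slightest care is the reduction of $\Delta(\top\vee\psi)$ to $\Delta\top$ via RE$\Delta$, which is exactly the kind of ``well-definedness-style'' manipulation already used repeatedly (e.g.\ in Lemma~\ref{lem.canonicalmodel}).
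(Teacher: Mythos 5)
Your proposal is correct and follows essentially the same route as the paper: derive $\Delta\top\in s$ from axiom $\Delta$N, conclude $\Delta(\top\vee\psi)\in s$ for every $\psi$ (the paper leaves the RE$\Delta$ step implicit, which you spell out), so $|\top|=S^\Lambda\in N^\Lambda_0(s)$, and then transfer $(n)$ to $(\M^\Lambda_0)^+$ via Prop.~\ref{prop.i&n}.
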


\begin{proof}
Let $s\in S^\Lambda$. By axiom $\Delta$N, we have $\Delta\top\in s$. This implies that $\Delta(\top\vee\psi)\in s$ for every $\psi$. Then $|\top|\in N^\Lambda_0(s)$, and thus $S\in N^\Lambda_0(s)$. We have now shown that $M^\Lambda_0$ has the property $(n)$. Then it follows that $(\M^\Lambda_0)^+$ is an $(mn)$-model from Prop.~\ref{prop.i&n}.
\end{proof}

By Prop.~\ref{prop.n-pres} and Lemma~\ref{lem.cm}, it follows immediately that
\begin{theorem}
${\bf EMN^\Delta}$ is strongly complete with respect to the class of $(mn)$-frames.
\end{theorem}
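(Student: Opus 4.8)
The plan is to assemble the theorem from the two ingredients just proved, exactly as the preceding results (e.g.\ Thm.~\ref{thm.comp-r}) were obtained. Write $\Lambda={\bf EMN^\Delta}$, and consider its minimal canonical model $\M_0^\Lambda=\lr{S^\Lambda,N_0^\Lambda,V^\Lambda}$ together with its supplementation $(\M_0^\Lambda)^+$. Since $\Lambda$ trivially extends both ${\bf M^\Delta}$ and ${\bf EMN^\Delta}$, Prop.~\ref{prop.n-pres} applies and tells us that $(\M_0^\Lambda)^+$ is an $(mn)$-model; likewise Lemma~\ref{lem.cm} applies and gives the truth lemma for $(\M_0^\Lambda)^+$, namely $(\M_0^\Lambda)^+,s\vDash\phi\iff\phi\in s$ for every $\phi\in\mathcal{L}(\Delta)$ and every $s\in S^\Lambda$.

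From here strong completeness is routine. First I would recall the contrapositive formulation: it suffices to show that every $\Lambda$-consistent set $\Gamma\subseteq\mathcal{L}(\Delta)$ is satisfiable in some $(mn)$-model. Given such a $\Gamma$, apply Lindenbaum's lemma to extend it to a maximal consistent set $s$ for $\Lambda$, so that $s\in S^\Lambda$ and $\Gamma\subseteq s$. By the truth lemma for $(\M_0^\Lambda)^+$, we get $(\M_0^\Lambda)^+,s\vDash\gamma$ for every $\gamma\in\Gamma$, and since $(\M_0^\Lambda)^+$ is an $(mn)$-model, $\Gamma$ is satisfied at $s$ in an $(mn)$-model, as required. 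Combining this with the soundness direction already recorded in the Soundness corollary ($ {\bf EMN^\Delta}\subseteq\text{Th}_\Delta(\mathbb{F}_{mn})$) yields $ {\bf EMN^\Delta}=\text{Th}_\Delta(\mathbb{F}_{mn})$.

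There is essentially no obstacle left to overcome here: the two genuinely substantive points---that supplementing the minimal canonical model preserves the canonical-model property (Lemma~\ref{lem.canonicalmodel}, hence Lemma~\ref{lem.cm}) and that property $(n)$ survives the minimal construction and is preserved by supplementation (Prop.~\ref{prop.i&n} and Prop.~\ref{prop.n-pres})---have already been established. The only thing to be mildly careful about is invoking Prop.~\ref{prop.n-pres} for the full system ${\bf EMN^\Delta}$ rather than for some proper extension, which is immediate since a system extends itself; everything else is the standard canonical-model wrap-up.
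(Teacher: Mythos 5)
Your proof is correct and follows exactly the paper's route: the paper also obtains this theorem immediately from Prop.~\ref{prop.n-pres} (which gives that $(\M_0^\Lambda)^+$ is an $(mn)$-model) together with the truth lemma of Lemma~\ref{lem.cm}, with the Lindenbaum-style wrap-up left implicit as routine. Your spelled-out contrapositive argument is just that routine step made explicit, so there is no substantive difference.
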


%The result below follows immediately from Thm.~\ref{thm.comp-r} and Prop.~\ref{prop.n-pres}.
%It is straightforward to obtain the following result.
Finally, for ${\bf K^\Delta}$. For any system $\Lambda\supseteq {\bf K^\Delta}$, we have $\Lambda\supseteq {\bf R^\Delta}$ and $\Lambda\supseteq {\bf EMN^\Delta}$. By Prop.~\ref{prop.i}, $(\M^\Lambda_0)^+$ has the property $(c)$; by Prop.~\ref{prop.n-pres}, $(\M^\Lambda_0)^+$ has the property $(n)$; by the definition of $(N^\Lambda_0)^+$, $(\M^\Lambda_0)^+$ has the property $(m)$. Therefore, $(\M^\Lambda_0)^+$ is a filter. Then combining Lemma~\ref{lem.cm}, we conclude that
\begin{theorem}
${\bf K^\Delta}$ is strongly complete with respect to the class of filters.
\end{theorem}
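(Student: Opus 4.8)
The plan is to reduce the completeness of $\mathbf{K^\Delta}$ to the machinery already assembled for $\mathbf{M^\Delta}$, $\mathbf{R^\Delta}$ and $\mathbf{EMN^\Delta}$, and to do so by working with the supplemented minimal canonical model $(\M_0^\Lambda)^+$ of an arbitrary system $\Lambda$ extending $\mathbf{K^\Delta}$. The first step is the easy structural observation: since $\mathbf{K^\Delta} = \mathbf{R^\Delta}+\Delta\text{N}$ contains all of $\Delta$Equ, $\Delta$M, $\Delta$C and $\Delta$N, any such $\Lambda$ simultaneously extends $\mathbf{R^\Delta}$ and $\mathbf{EMN^\Delta}$. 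This is what licenses invoking the two preservation results already proved.

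Next I would assemble the frame properties of $(\M_0^\Lambda)^+$ one at a time. Property $(m)$ is immediate from the very definition of the supplementation $(N_0^\Lambda)^+$ as the superset closure of $N_0^\Lambda$. Property $(c)$ comes from Proposition~\ref{prop.i}: the minimal canonical model $\M_0^\Lambda$ already has $(c)$ (via axiom $\Delta$C), and by Proposition~\ref{prop.i&n} this passes to the supplementation. Property $(n)$ comes from Proposition~\ref{prop.n-pres}: $\M_0^\Lambda$ has $(n)$ (via axiom $\Delta$N), and again Proposition~\ref{prop.i&n} carries it up to $(\M_0^\Lambda)^+$. Putting $(m)$, $(c)$ and $(n)$ together, $(\M_0^\Lambda)^+$ is a filter, i.e. it belongs to the target frame class.

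The final step is to combine this with the truth lemma. Lemma~\ref{lem.cm} tells us that $\phi^{(\M_0^\Lambda)^+} = |\phi|$ for every $\phi \in \mathcal{L}(\Delta)$ and every $s \in S^\Lambda$, so $(\M_0^\Lambda)^+$ is genuinely a canonical model for $\Lambda$. Strong completeness then follows by the standard argument: given a $\Lambda$-consistent set $\Gamma$, extend it to a maximal consistent set $s \in S^\Lambda$ by Lindenbaum, and observe that $(\M_0^\Lambda)^+, s \vDash \Gamma$ by the truth lemma, while $(\M_0^\Lambda)^+$ is a filter; hence every $\Lambda$-consistent set is satisfiable in a filter, which is the contrapositive of strong completeness with respect to the class of filters. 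I would spell this out for $\Lambda = \mathbf{K^\Delta}$ itself, noting that soundness (from the earlier corollary) gives the matching inclusion and hence $\mathbf{K^\Delta} = \text{Th}_\Delta(\mathbb{F}_{mcn})$.

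There is essentially no obstacle here: all the real work — constructing the canonical function via the ``$\Delta(\phi\vee\psi)$ for every $\psi$'' clause, proving the truth lemma (Lemma~\ref{lemma.truthlemma-mc}), showing the supplementation is still canonical (Lemma~\ref{lem.canonicalmodel}, Lemma~\ref{lem.cm}), and verifying that $(c)$ and $(n)$ survive supplementation (Proposition~\ref{prop.i&n}) — has already been done in the preceding theorems for $\mathbf{M^\Delta}$ and $\mathbf{R^\Delta}$ and $\mathbf{EMN^\Delta}$. The only thing to be careful about is simply to cite the right preservation lemma for each of the three closure properties and to note explicitly that $\Lambda \supseteq \mathbf{K^\Delta}$ entails $\Lambda \supseteq \mathbf{R^\Delta}$ and $\Lambda \supseteq \mathbf{EMN^\Delta}$, so that Propositions~\ref{prop.i} and~\ref{prop.n-pres} are applicable. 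The proof is therefore a short three-line synthesis rather than a new construction.
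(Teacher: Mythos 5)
Your proposal is correct and follows essentially the same route as the paper: the paper likewise observes that any $\Lambda\supseteq{\bf K^\Delta}$ extends both ${\bf R^\Delta}$ and ${\bf EMN^\Delta}$, obtains $(c)$ and $(n)$ for $(\M_0^\Lambda)^+$ from the two preservation propositions, gets $(m)$ from the definition of supplementation, and concludes via the truth lemma for $(\M_0^\Lambda)^+$. No gaps; your added spelling-out of the Lindenbaum step is just the routine detail the paper leaves implicit.
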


\begin{remark}
It may be natural to ask whether the function $N^\Lambda$ in Def.~\ref{def.canonicalmodel} (or the minimal $N^\Lambda_0$) work for systems excluding $\Delta$M, e.g. ${\bf E^\Delta}$. We think the answer is negative, since the truth lemma (Lemma~\ref{lemma.truthlemma-mc}) indeed used the axiom $\Delta$M.
\end{remark}

We conclude this section with a diagram and some remarks.
By constructing countermodels, we can obtain the following cubes, which summarize the deductive powers of the 16 logics mentioned in this paper. Among these systems, ${\bf E^\Delta}={\bf EZ^\Delta}$ is the weakest system, and ${\bf KZ^\Delta}$ is the strongest system. An arrow from a system $S_1$ to another $S_2$ means that $S_2$ is deductively stronger than $S_1$. This is in line with the case of classical modal logics, cf. e.g.~\cite[Fig.~8.1]{Chellas:1980}.

%By constructing countermodels, we can obtain the following cube, which summarizes the deductive powers of the eight systems in this paper. Among these systems, ${\bf E^\Delta}$ is the weakest system, and ${\bf K^\Delta}$ is the strongest system. An arrow from a system $S_1$ to another $S_2$ means that $S_2$ is deductively stronger than $S_1$. This is in line with the case of classical modal logics, cf. e.g.~\cite[Fig.~8.1]{Chellas:1980}.

\scalebox{1.0}{
\begin{tikzpicture}[z=0.35cm]
\node (000) at (0,0,0) {$\boxed{\blue{{\bf E^\Delta}={\bf EZ^\Delta}}}$};
\node (001) at (0,0,6) {$\boxed{\bf EC^\Delta}$};
\node (002) at (0,3,6) {$\boxed{\blue{{\bf ECZ^\Delta}}}$};
\node (010) at (0,6,0) {$\boxed{\blue{{\bf M^\Delta}(={\bf EM^\Delta})}}$};
\node (011) at (0,6,6) {$\boxed{\blue{{\bf R^\Delta}(={\bf EMC^\Delta})}}$};
\node (100) at (6,0,0) {$\boxed{\blue{{\bf EN^\Delta}={\bf ENZ^\Delta}}}$};
\node (101) at (6,0,6) {$\boxed{{\bf ECN^\Delta}}$};
\node (121) at (6,3,6) {$\boxed{\blue{{\bf ECNZ^\Delta}}}$};
\node (110) at (6,6,0) {$\boxed{\blue{{\bf EMN^\Delta}}}$};
\node (111) at (6,6,6) {$\boxed{\blue{{\bf K^\Delta}(={\bf EMCN^\Delta})}}$};
\node (333) at (6,10,6) {$\boxed{{\bf KZ^\Delta}(={\bf EMCNZ^\Delta})}$};
\node (313) at (0,10,6) {$\boxed{{\bf RZ^\Delta}=({\bf EMCZ^\Delta})}$};
\node (113) at (6,10,0) {$\boxed{{\bf EMNZ^\Delta}}$};
\node (131) at (0,10,0) {$\boxed{{\bf MZ^\Delta}(={\bf EMZ^\Delta})}$};
\draw[->] (010) -- (131);
\draw[->] (131) -- (113);
\draw[->] (110) -- (113);
\draw[->] (131) -- (313);
\draw[->] (313) -- (333);
\draw[->] (113) -- (333);
\draw[->] (111) -- (333);
\draw[dashed,->] (011) -- (313);

\draw[dashed,->] (001) -- (002);
\draw[->] (101) -- (121);
\draw[dashed,->] (002) -- (121);

\draw[->] (000) -- node[] {} (100);
\draw[dashed,->] (001) -- node[] {} (101);
\draw[dashed,->] (011) -- node[] {} (111);
\draw[->] (000) -- node[] {} (010);
\draw[dashed,->] (002) -- node[] {} (011);
\draw[->] (121) -- node[] {} (111);
\draw[dashed,->] (000) -- node[] {} (001);
\draw[dashed,->] (010) -- node[] {} (011);
\draw[->] (100) -- node[] {} (101);
\draw[->] (110) -- node[] {} (111);
\draw[->] (010) -- node[] {} (110);
\draw[->] (100) -- node[] {} (110);
\end{tikzpicture}
}

\weg{\scalebox{1.0}{
\begin{tikzpicture}[z=0.35cm]
\node (000) at (0,0,0) {$\boxed{{\bf E^\Delta}={\bf EZ^\Delta}}$};
\node (001) at (0,0,5) {$\boxed{\bf EC^\Delta}$};
\node (002) at (0,2,5) {$\boxed{\bf ECZ^\Delta}$};
\node (010) at (0,5,0) {$\boxed{{\bf M^\Delta}(={\bf EM^\Delta})}$};
\node (011) at (0,5,5) {$\boxed{{\bf R^\Delta}(={\bf EMC^\Delta})}$};
\node (100) at (5,0,0) {$\boxed{{\bf EN^\Delta}={\bf ENZ^\Delta}}$};
\node (101) at (5,0,5) {$\boxed{{\bf ECN^\Delta}}$};
\node (121) at (5,2,5) {$\boxed{{\bf ECNZ^\Delta}}$};
\node (110) at (5,5,0) {$\boxed{{\bf EMN^\Delta}}$};
\node (111) at (5,5,5) {$\boxed{{\bf K^\Delta}(={\bf EMCN^\Delta})}$};
\draw[dashed,->] (001) -- (002);
\draw[->] (101) -- (121);
\draw[dashed,->] (002) -- (121);

\draw[->] (000) -- node[] {} (100);
\draw[dashed,->] (001) -- node[] {} (101);
\draw[->] (011) -- node[] {} (111);
\draw[->] (000) -- node[] {} (010);
\draw[dashed,->] (002) -- node[] {} (011);
\draw[->] (121) -- node[] {} (111);
\draw[dashed,->] (000) -- node[] {} (001);
\draw[->] (010) -- node[] {} (011);
\draw[->] (100) -- node[] {} (101);
\draw[->] (110) -- node[] {} (111);
\draw[->] (010) -- node[] {} (110);
\draw[->] (100) -- node[] {} (110);
\end{tikzpicture}
}}

\weg{\[\xymatrix{
  & \boxed{{\bf R^\Delta}={\bf EMC^\Delta}} \ar[rr]
      &  & \boxed{{\bf K^\Delta}={\bf EMCN^\Delta}}        \\
  \boxed{\bf M^\Delta} \ar[ur]\ar[rr]
      &  & \boxed{{\bf EMN^\Delta}} \ar[ur] \\
  & \boxed{{\bf EC^\Delta}}\ar[uu]\ar[rr]
      &  & \boxed{{\bf ECN^\Delta}}   \ar[uu]            \\
   \boxed{{\bf E^\Delta}={\bf EZ^\Delta}} \ar[rr]\ar[ur]\ar[uu]
     &  & \boxed{{\bf EN^\Delta}={\bf ENZ^\Delta}} \ar[ur] \ar[uu]       }\]}
\weg{\[\xymatrix{
  & {\bf R^\Delta} \ar[rr]
      &  & {\bf K^\Delta}        \\
  {\bf M^\Delta} \ar[ur]\ar[rr]
      &  & {\bf (EMN)^\Delta} \ar[ur] \\
  & {\bf (EC)^\Delta}\ar[uu]\ar[rr]
      &  & {\bf (ECN)^\Delta}   \ar[uu]            \\
   {\bf E^\Delta} \ar[rr]\ar[ur]\ar[uu]
     &  & {\bf (EN)^\Delta} \ar[ur] \ar[uu]       }\]}
\weg{\[\xymatrix@!0{
  & {\bf R^\Delta} \ar[rr]
      &  & {\bf K^\Delta}        \\
  {\bf M^\Delta} \ar[ur]\ar[rr]
      &  & {\bf (EMN)^\Delta} \ar[ur] \\
  & {\bf (EC)^\Delta}\ar[uu]\ar[rr]
      &  & {\bf (ECN)^\Delta}   \ar[uu]            \\
   {\bf E^\Delta} \ar[rr]\ar[ur]\ar[uu]
     &  & {\bf (EN)^\Delta} \ar[ur] \ar[uu]       }\]}

\begin{remark} Let Z denote $\Box\phi\to\Box\neg\phi$.
\begin{enumerate}
\item In this paper, among 16 logics in the above diagram, 10 logics were axiomatized (labeled with blue), and axiomatizations of the remaining 6 logics are open. Our conjecture is that if we replace $\Delta$M in the axiomatizations of logics on the second level with s$\Delta$M, namely $\Delta\phi\to\Delta(\phi\vee\psi)$, or equivalently $\Delta(\phi\land\psi)\to\Delta\phi$, then we will obtain the axiomatizations of logics on the third (topmost) level.

\item ${\bf S^\Delta}\subseteq {\bf SZ^\Delta}$ holds for any classical modal logics ${\bf S}$. This is because for any classes of frames $\mathbb{K}$ and $\mathbb{K}'$, if ${\bf S^\Delta}=\text{Th}_\Delta(\mathbb{K})$ and ${\bf SZ^\Delta}=\text{Th}_\Delta(\mathbb{K}')$, then $\mathbb{K}'\subseteq \mathbb{K}$. For instance, although we do not know yet what the axiomatizations of logics ${\bf EC^\Delta}$ and ${\bf ECN^\Delta}$ are, we do know ${\bf EC^\Delta}\subseteq {\bf ECZ^\Delta}$ and ${\bf ECN^\Delta}\subseteq {\bf ECNZ^\Delta}$, since $\mathbb{F}_{cz}\subseteq \mathbb{F}_{c}$ and $\mathbb{F}_{cnz}\subseteq \mathbb{F}_{cn}$.

\item However, the equation ${\bf S^\Delta}={\bf SZ^\Delta}$ does not hold for any classical modal logics ${\bf S}$, but only holds for the cases when ${\bf S}={\bf E}$ or ${\bf S}={\bf EN}$ (in comparison, ${\bf E}\neq {\bf EZ}$ and ${\bf EN}\neq {\bf ENZ}$ as is easily verified). This may be explained with help of a notion of `$c$-variation' introduced in~\cite[Def.~8]{Fan:2018a}, which we call now `complementation', for the sake of reference.

Given a neighborhood function $N$, we define its {\em complementation} $N^z$ as follows:
$$N^z(s)=\{X\subseteq S\mid X\in N(s)\text{ or }S\backslash X\in N(s)\}.$$

Given $\mathcal{F}=\lr{S,N}$ and $\M=\lr{S,N,V}$, put $\mathcal{F}^z=\lr{S,N^z}$ and $\M^z=\lr{S,N^z,V}$. Then one may easily verify that for every $\phi\in\mathcal{L}(\Delta)$, for every $s\in S$, $\M,s\vDash\phi$ iff $\M^z,s\vDash\phi$. Consequently, $\text{Th}_\Delta(\mathcal{F})= \text{Th}_\Delta(\mathcal{F}^z)$. Therefore, if $\mathbb{K}$ is a class of frames and we denote $\mathbb{K}^z=\{\mathcal{F}^z\mid \mathcal{F}\in\mathbb{K}\}$, then $\text{Th}_\Delta(\mathbb{K})=\text{Th}_\Delta(\mathbb{K}^z)$, and $\mathbb{K}\subseteq \mathbb{K}^z$. Note that the complementation of a frame is a $(z)$-frame: $(\mathbb{F}_{\text{all}})^z=\mathbb{F}_{z}=\mathbb{F}_{\text{all}}$ (the $\subseteq$ is obvious, for the $\supseteq$ part, notice that $\mathbb{F}_{\text{all}}\subseteq (\mathbb{F}_{\text{all}})^z$), and the complementation of an $(n)$-frame is an $(nz)$-frame: $(\mathbb{F}_{n})^z=\mathbb{F}_{nz}=\mathbb{F}_n$. Therefore, ${\bf E^\Delta}={\bf EZ^\Delta}$ (recall that ${\bf E^\Delta}=\text{Th}_\Delta(\mathbb{F}_\text{all})$ and ${\bf EZ^\Delta}=\text{Th}_\Delta(\mathbb{F}_z)$, see Thm.~\ref{thm.comp-E}; and $\mathbb{F}_\text{all}=\mathbb{F}_z$), and similarly ${\bf EN^\Delta}={\bf ENZ^\Delta}$. This explains why at the bottom of the cube of $\Delta$-theories we did not have the difference between the logics with and without Z. But the complementation of a $(c)$-frame is not necessarily a $(c)$-frame,\footnote{Consider a frame $\mathcal{F}=\lr{S,N}$ where $S=\{s,t\}$ and $N(s)=\{\{s\}\}$ and $N(t)=\emptyset$. It can be easily seen that $\mathcal{F}$ is a $(c)$-frame. However, its complementation is not: on one hand, from $\{s\}\in N(s)$ it follows that $\{s\}\in N^z(s)$ and $\{t\}\in N^z(s)$; on the other hand, because $\emptyset\notin N(s)$ and $S\notin N(s)$, we have $\emptyset=\{s\}\cap\{t\}\notin N^z(s)$.} hence not always a $(cz)$-frame. In fact, as shown in items (iii) and (v) of Prop.~\ref{prop.validinvalid}, the class of $(c)$-frames and the class of $(cz)$-frames can be distinguished by $\Delta$C, that is, $\Delta\text{C}\in\text{Th}_\Delta(\mathbb{F}_{cz})$ but $\Delta\text{C}\notin \text{Th}_\Delta(\mathbb{F}_c)$, and thus ${\bf EC^\Delta}\neq{\bf ECZ^\Delta}$. Similarly, ${\bf ECN^\Delta}\neq {\bf ECNZ^\Delta}$.

\item Although we have no idea yet about what the axiomatizations of logics on the third level exactly are, we do know that the logics on that level differ from the logics on the second level. Indeed, the smallest logic on the third level, ${\bf MZ^\Delta}$, contains s$\Delta$M (since $\text{s}\Delta\text{M}\in \text{Th}_\Delta(\mathbb{F}_{mz})$ by item (xi) of Prop.~\ref{prop.validinvalid}), which does not belong to the greatest logic on the second level, ${\bf K^\Delta}$ (because $\text{s}\Delta\text{M}\notin \text{Th}_\Delta(\mathbb{F}_{mcn})$ by item (vii) of Prop.~\ref{prop.validinvalid}). %This is because the complementation of an $(m)$-frame (resp. an $(mc)$-frame, an $(mn)$-frame, an $(mcn)$-frame) is not necessarily an $(m)$-frame (resp. an $(mc)$-frame, an $(mn)$-frame, an $(mcn)$-frame), hence not always an $(mz)$-frame (resp. an $(mcz)$-frame, an $(mnz)$-frame, an $(mcnz)$-frame). Take the pair of ${\bf M^\Delta}$ and ${\bf MZ^\Delta}$ as an example. Consider a frame $\mathcal{F}=\lr{S,N}$ where $S=\{s,t,u\}$, $N(s)=\{\{t,u\},S\}$ and $N(t)=N(u)=\emptyset$. It is obvious that $\mathcal{F}$ is an $m$-frame. However, its complementation is not: on one hand, we have $\{s\}\in N^z(s)$, which follows from the fact that $\{t,u\}\in N(s)$; on the other hand, $\{s,t\}\notin N^z(s)$, which is because $\{s,t\}\notin N(s)$ and $\{u\}\notin N(s)$. Actually, the class of $(m)$-frames and the class of $(mz)$-frames can be distinguished by s$\Delta$M, as shown in items (viii) and (xi) of Prop.~\ref{prop.validinvalid}. Therefore, ${\bf M^\Delta}\neq {\bf MZ^\Delta}$.

%\item Analogously, the complementation of an $(m)$-frame is not necessarily an $(m)$-frame, hence not always an $(mz)$-frame. To see this, consider a frame $\mathcal{F}=\lr{S,N}$ where $S=\{s,t,u\}$, $N(s)=\{\{t,u\},S\}$ and $N(t)=N(u)=\emptyset$. It is obvious that $\mathcal{F}$ is an $m$-frame. However, its complementation is not: on one hand, we have $\{s\}\in N^z(s)$, which follows from the fact that $\{t,u\}\in N(s)$; on the other hand, $\{s,t\}\notin N^z(s)$, which is because $\{s,t\}\notin N(s)$ and $\{u\}\notin N(s)$. Actually, the class of $(m)$-frames and the class of $(mz)$-frames can be distinguished by s$\Delta$M, as shown in items (viii) and (xi) of Prop.~\ref{prop.validinvalid}. Therefore, ${\bf M^\Delta}\neq {\bf MZ^\Delta}$. With a similar argument, we can explain why ${\bf R^\Delta}\neq {\bf RZ^\Delta}$, ${\bf EMN^\Delta}\neq {\bf EMNZ^\Delta}$ and ${\bf K^\Delta}\neq {\bf KZ^\Delta}$, although we have no idea yet about what these logics with Z exactly are. Our conjecture is that if we replace $\Delta$M in such logics without Z by s$\Delta$M, namely $\Delta\phi\to\Delta(\phi\vee\psi)$, or equivalently $\Delta(\phi\land\psi)\to\Delta\phi$, then we will obtain these logics with Z.

\item  %Let Z denote $\Box\phi\to\Box\neg\phi$. By the last item of Fact~\ref{fact.correspondence-box}, ${\bf E}\neq {\bf EZ}$ and ${\bf EN}\neq {\bf ENZ}$. In contrast, ${\bf E^\Delta}={\bf EZ^\Delta}$ and ${\bf EN^\Delta}={\bf ENZ^\Delta}$.
${\bf ECZ^\Delta}\subseteq {\bf R^\Delta}$ and ${\bf ECNZ^\Delta}\subseteq {\bf K^\Delta}$, even though the corresponding inclusions of $\Box$-logics do not hold (because $\Box\phi\to\Box\neg\phi$ is not provable in ${\bf K}$ and thus in ${\bf R}$). This can be easily seen from the axiomatizations:

    ${\bf ECZ^\Delta}={\bf E^\Delta}+\Delta\text{C}$ and ${\bf R^\Delta}={\bf E^\Delta}+\Delta\text{C}+\Delta\text{M}$;

    ${\bf ECNZ^\Delta}={\bf E^\Delta}+\Delta\text{C}+\Delta\text{N}$ and ${\bf K^\Delta}={\bf E^\Delta}+\Delta\text{C}+\Delta\text{N}+\Delta\text{M}$.
%\item Although ${\bf ECZ}\not\subseteq {\bf K}$ (because $\Box\phi\to\Box\neg\phi$ is not provable in ${\bf K}$), ${\bf ECZ^\Delta}\subseteq {\bf K^\Delta}$. Similarly, ${\bf ECNZ}\not\subseteq{\bf K}$ but ${\bf ECNZ^\Delta}\subseteq {\bf K^\Delta}$.

\item It may be interesting to compare ${\bf K^\Delta}$ with Kuhn's minimal non-contingency logic ${\bf K}\Delta$~\cite{DBLP:journals/ndjfl/Kuhn95}.\footnote{Kuhn miswrote ${\bf K}\Delta$ on p. 231 as ${\bf K4}\Delta$. This was pointed out by George Schumm in the review of \cite{DBLP:journals/ndjfl/Kuhn95}, see~\cite{Kuhn:1996}.} It turns out that the two logics are equivalent, since $\Delta\phi\lra\Delta\neg\phi$ (our axiom $\Delta$Equ) is provable in ${\bf K}\Delta$, by using $\Delta\neg\phi\to\Delta\phi$ (denoted {\bf A1} there) and classical propositional calculus (denoted {\bf PL} there and TAUT here) and replacement of equivalents for $\Delta$ (denoted {\bf RE} there and RE$\Delta$ here), and $\Delta\top$ (our axiom $\Delta$N) is interderivable with Kuhn's inference rule $\dfrac{\phi}{\Delta\phi}$ (denoted ${\bf R\Delta}$ there) due to replacement of equivalents for $\Delta$.

\item The axiomatization of the logic of the form ${\bf LZ^\Delta}$ can be obtained by simply taking the axiomatization of ${\bf L}$ and replacing everywhere $\Box$ with $\Delta$. In particular, we obtain $\Delta\phi\lra\Delta\neg\phi$. This is because any logic with Z has $\Box \phi\lra(\Box\phi\vee\Box\neg\phi)$ ($\Box\neg\phi\to\Box\phi$ can be derived from Z by using classical propositional calculus and replacement of equivalents for $\Box$, RE), that is, $\Box\phi\lra \Delta\phi$. Semantically, this can be explained by the fact that $\mathbb{F}_z\vDash\Box\phi\lra\Delta\phi$.

\end{enumerate}
\end{remark}

\section{Reflection: how does the function $\lambda$ arise?}\label{sec.reflection-lambda}

As noted, in order to show the completeness of proof systems including $\Delta$M, a crucial part is to define a suitable canonical function, i.e. $N^\Lambda$, which is inspired by the function $\lambda$ in~\cite{DBLP:journals/ndjfl/Kuhn95}. The $\lambda$ is very important for the definition of canonical relation and thus for the completeness proof in the cited paper. It is this function that helps find simple axiomatization for the minimal contingency logic under Kripke semantics, so to speak, since we can obtain his axiomatization from the truth lemma proof, whichever axioms or rules needed are added. Despite its importance, the author did not say any intuitive idea about $\lambda$. And this function was thought of as `ingenious' creation by some other researchers, say Humberstone~\cite[p.~118]{Humberstone:2002}\footnote{Although~\cite[p.~1279]{Humberstone:2013} states ``This simplifies a definition from Humberstone~\cite{Humberstone95}, p. 221f.'', this wording leaves it open whether Kuhn's $\lambda$ is the same function as Humberstone's $\lambda$, or a different one. In fact, by personal communication, Humberstone has confirmed that he had not explicitly considered this question before seeing a draft of of the current paper settling it in favor of the first alternative --- that what is simplified is {\em the way} the function is picked out rather than {\em which function} is picked out.} and Fan, Wang and van Ditmarch~\cite[p.~101]{Fanetal:2015}. But how does the function arise? In this section, we unfold the mystery of $\lambda$, and show that it is actually equal to a related function $\lambda$ proposed in Humberstone~\cite{Humberstone95}.

%An axiomatization was proposed in~\cite{Humberstone95}, with a complicated proof method which needs to resort to K\"{o}nig's Lemma.
To show completeness of minimal contingency logic under Kripke semantics, Humberstone~\cite[p.~219]{Humberstone95} defined the canonical relation $R^c$ as $xR^cy$ iff $\lambda(x)\subseteq y$, where, denoted by H's $\lambda$,
$$\lambda(x)=\{\phi\mid \Delta \phi\in x\text{ and }\forall \psi\text{ such that }\vdash \phi\to \psi,\Delta \psi\in x\}.\footnote{In the definition of $\lambda$, Humberstone used $A$ and $B$ rather than $\phi$ and $\psi$, respectively. To maintain the consistency of notation in this paper, we here use $\phi$ and $\psi$ instead.}$$
The reason for defining the function $\lambda$ in such a way, is that the author would like to `simulate' the canonical relation of the minimal modal logic, which is defined via $xRy$ iff $\lambda(x)\subseteq y$, where $\lambda(x)=\{\phi\mid \Box \phi\in x\}$. This can be seen from several passages:

\medskip

{\em The intuitive idea is that for $x\in W$, $\lambda(x)$ is the set of formulas which are necessary at $x$. We think of $\lambda(x)$ as a ``labeling'' of all formulas $A$ such that $\Delta A\in x$, labeling each such formula as {\em Necessary} (recorded by putting $A$ into $\lambda(x)$) or else as {\em Impossible} (putting $\neg A$ into $\lambda(x)$).}

$\cdots$

{\em The idea of the entry condition on $A$, that only such $A$ (with $\Delta A\in x$) should be labeled as Necessary if all their consequences are non-contingent, is that $\cdots$, those non-contingencies which qualify as such because they, rather than their negations, are necessary and have only non-contingent consequences, since those consequences are themselves necessary.}~\cite[p.~219]{Humberstone95}

\medskip

Then the function $\lambda$ was simplified, and accordingly, the completeness proof was simplified in~\cite{DBLP:journals/ndjfl/Kuhn95}. There, $\lambda(x)$, denoted by K's $\lambda$, is defined as:
$$\lambda(x)=\{\phi\mid\forall \psi, \Delta(\phi\vee \psi)\in x\}.$$
%But the author of~\cite{DBLP:journals/ndjfl/Kuhn95} did not say any intuitive idea of the simplified function $\lambda$. However, as we will demonstrate, in fact, Kuhn's $\lambda$ is equal to Humberstone's $\lambda$.
In the sequel, we will demonstrate that, in fact, K's $\lambda$ is equal to H's $\lambda$.

To begin with, notice that $\vdash \phi\to \phi$, thus the part following `and' in the H's $\lambda$ definition entails $\Delta \phi\in x$. Therefore, the H's $\lambda(x)$ is equal to a simplified version:
$$\lambda(x)=\{\phi\mid \forall \psi\text{ such that }\vdash \phi\to \psi,\Delta \psi\in x\}.$$
Then it is sufficient to show that the simplified $\lambda$ is further equal to K's $\lambda$, even in the setting of arbitrary neighborhood contingency logics (as opposed to Kripke contingency logics).
\begin{proposition} Let $x$ be a maximal consistent set. Given the rule RE$\Delta$, the following statements are equivalent.\footnote{RE$\Delta$ is just ($\Delta$Cong) in~\cite{Humberstone95}.}

(1) For every $\psi\text{ such that }\vdash \phi\to \psi$, $\Delta \psi\in x$.

(2) For every $\psi$, $\Delta(\phi\vee \psi)\in x$.
\end{proposition}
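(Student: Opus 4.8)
The plan is to prove the two implications separately, using the rule RE$\Delta$ (equivalently $(\Delta\text{Cong})$) as the only deductive tool, plus the fact that $x$ is a maximal consistent set (so that $\Lambda$-theoremhood of a biconditional transfers membership across $x$).

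\textbf{Direction (2) $\Rightarrow$ (1).} Suppose (2) holds, and let $\psi$ be any formula with $\vdash\phi\to\psi$. First I would observe that $\vdash\phi\to\psi$ gives $\vdash(\phi\vee\psi)\lra\psi$ by propositional reasoning (TAUT). Applying RE$\Delta$ to this biconditional yields $\vdash\Delta(\phi\vee\psi)\lra\Delta\psi$. Instantiating the universal statement (2) at this particular $\psi$ gives $\Delta(\phi\vee\psi)\in x$; since $x$ is deductively closed (being maximal consistent) and contains the theorem $\Delta(\phi\vee\psi)\lra\Delta\psi$, we conclude $\Delta\psi\in x$. As $\psi$ was an arbitrary consequence of $\phi$, (1) follows.

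\textbf{Direction (1) $\Rightarrow$ (2).} Suppose (1) holds, and let $\psi$ be arbitrary. The key move is to pick the right instance of the universal in (1): take the formula $\phi\vee\psi$, which clearly satisfies $\vdash\phi\to(\phi\vee\psi)$ (a propositional tautology). Applying (1) with this choice gives $\Delta(\phi\vee\psi)\in x$, which is exactly what (2) asserts for this $\psi$. Since $\psi$ was arbitrary, (2) holds.

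\textbf{Main obstacle.} There is no real obstacle here --- the statement is essentially a bookkeeping observation, and the two directions are near-trivial once one notices the correct instantiations ($\psi$ itself in one direction, $\phi\vee\psi$ in the other) together with the propositional equivalence $\vdash\phi\to\psi \Leftrightarrow\ \vdash(\phi\vee\psi)\lra\psi$. The only point that warrants care is making explicit that maximal consistency is used to pass from ``$\vdash\Delta(\phi\vee\psi)\lra\Delta\psi$'' and ``$\Delta(\phi\vee\psi)\in x$'' to ``$\Delta\psi\in x$'', i.e. that RE$\Delta$ is applied at the level of theorems and then membership is transported through the maximal consistent set. I would also note in passing why this Proposition matters: it shows K's $\lambda(x)=\{\phi\mid \forall\psi,\ \Delta(\phi\vee\psi)\in x\}$ coincides with the simplified H's $\lambda(x)=\{\phi\mid \forall\psi \text{ with }\vdash\phi\to\psi,\ \Delta\psi\in x\}$, settling the identification claimed in the section.
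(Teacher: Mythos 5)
Your proof is correct and follows essentially the same route as the paper's: instantiating (1) at $\phi\vee\psi$ for one direction, and using $\vdash(\phi\vee\psi)\lra\psi$ plus RE$\Delta$ and deductive closure of $x$ for the other. The only difference is that you spell out the role of maximal consistency explicitly, which the paper leaves implicit.
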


\begin{proof}
$(1)\Longrightarrow(2)$: suppose (1) holds. Since $\vdash\phi\to\phi\vee\psi$, then it is immediate by (1) that $\Delta(\phi\vee\psi)\in x$, namely (2).

$(2)\Longrightarrow(1)$: suppose (2) holds, to show (1). For this, assume that $\vdash\phi\to\psi$, then $\vdash\phi\vee\psi\lra\psi$, by RE$\Delta$, $\vdash\Delta(\phi\vee\psi)\lra\Delta\psi$. By (2), we obtain that $\Delta\psi\in x$, as desired.
%This is because the condition that (i) ``$\forall B$ such that $\vdash A\to B$, $\Delta B\in x$'' is equivalent to the condition that (ii) ``$\forall B, \Delta(A\vee B)\in x$'', given the axiom  inference rule $\dfrac{A}{\Delta A}$ (this is (1.6) in~\cite[p.~216]{Humberstone95}). We prove the equivalence as follows. Let the inference rule in question be given. Firstly, suppose (i) holds, since for all $B$, $\vdash A\to A\vee B$, (i) gives $\Delta(A\vee B)\in x$, viz. (ii). Conversely, suppose (ii) holds, and $\vdash A\to B$, by the inference rule, $\vdash\Delta(A\to B)$, and thus $\Delta(A\to B)\in x$; moreover, by (ii) and $\Delta(A\vee B)\lra \Delta(\neg A\to B)$, we have $\Delta(\neg A\to B)\in x$. Now combining $\Delta(A\to B)\in x$ and $\Delta(\neg A\to B)\in x$, and using axiom $\Delta A\land\Delta B\to\Delta (A\land B)$ (this is a special case of (2.2) in~\cite[p.~217]{Humberstone95}), we obtain that $\Delta B\in x$. Since for all $B$, from the supposition (ii) and $\vdash A\to B$, it follows that $\Delta B\in x$. Thus we conclude that (ii) implies (i).
\end{proof}

%\section{Conclusion and Discussions}\label{sec.concl}
\section{Concluding Discussions}\label{sec.concl}

In this paper, by defining suitable neighborhood canonical functions, we presented a family of contingency logics under neighborhood semantics. In particular, inspired by Kuhn's function $\lambda$ in~\cite{DBLP:journals/ndjfl/Kuhn95}, we defined a desired canonical neighborhood function, and then axiomatized monotone contingency logic and regular contingency logic and other logics including the axiom $\Delta$M, thereby answering two open questions raised in~\cite{Bakhtiarietal:2017}. We then reflected on the function $\lambda$, and showed that it is actually equal to Humberstone's function $\lambda$ in~\cite{Humberstone95}, even in the setting of arbitrary neighborhood contingency logics.

Moreover, as we show in Appendix, in ${\bf M^\Delta}$, $\Delta$M can be replaced by $\Delta\phi\to\Delta(\phi\to\psi)\vee \Delta(\neg\phi\to\chi)$, and in ${\bf R^\Delta}$, $\Delta$C can be replaced by $\Delta(\psi\to\phi)\land\Delta(\neg\psi\to\phi)\to\Delta\phi$.\weg{\footnote{The hard part is the direction from $\Delta(\psi\to\phi)\land\Delta(\neg\psi\to\phi)\to\Delta\phi$ to $\Delta$C. The proof details for this, we refer to~\cite[Prop.~50]{DBLP:journals/corr/FanWD13}, where knowing whether operator $\textit{Kw}$ is the epistemic reading of $\Delta$.}} Thus we can also adopt these two alternative formulas to axiomatize monotone contingency logic and regular contingency logic. Therefore, it was {\em wrong} to claim that ``This raises the questions of what the axiomatizations are of monotone contingency logic and regular contingency logic. $\cdots$ one {\em cannot} fill these gaps with the axioms $\Delta\phi\to\Delta(\phi\to\psi)\vee \Delta(\neg\phi\to\chi)$ and $\Delta(\psi\to\phi)\land\Delta(\neg\psi\to\phi)\to\Delta\phi$. So these
questions remain open.'' on~\cite[p.~62]{Bakhtiarietal:2017} and~\cite[pp.~124--125]{Bakhtiarinoodeh:2017thesis}. Also, we answer the two open questions therein.

Recall that an `almost definability' schema, $\nabla\chi\to(\Box\phi\lra(\Delta\phi\land\Delta(\chi\to\phi)))$, is proposed in~\cite{Fanetal:2014}, and shown in~\cite{Fanetal:2015} to be applied to axiomatize contingency logic over much more Kripke frame classes than Kuhn's function $\lambda$ and other variations. Therefore, it may be natural to ask if the schema can also work in the neighborhood setting. The canonical neighborhood function inspired by the schema seems to be
$$N(s)=\{|\phi|\mid \Delta\phi\land\Delta(\psi\to\phi)\in s\text{ for some }\nabla\psi\in s\}.$$
Unfortunately the answer seems to be negative. The reason can be explained as follows. Although $N^\Lambda$ in Def.~\ref{def.canonicalmodel} is almost monotonic in the sense that if $|\phi|\in N^\Lambda(s)$ and $|\phi|\subseteq |\psi|$, then $|\psi|\in N^\Lambda(s)$, as can be easily seen from the proof of Lemma~\ref{lem.canonicalmodel}, in contrast, as one may easily verify, $N$ is not almost monotonic in the above sense, i.e., it fails that if $|\phi|\in N(s)$ and $|\phi|\subseteq |\psi|$, then $|\psi|\in N(s)$.
%Recall that $N^c$ is inspired by the Kuhn's function $\lambda$.
%$$N^c(s)=\{|\phi|\mid \Delta(\phi\vee\psi)\in s\text{ for every }\psi\}$$
This can also explain why $N^\Lambda$ works well for systems extending ${\bf M^\Delta}$\weg{monotone and regular contingency logics and other logics including the axiom $\Delta$M}. Despite this fact, this $N^\Lambda$ does not apply to systems excluding the axiom $\Delta$M, since we need this axiom to ensure the truth lemma (Lemma~\ref{lemma.truthlemma-mc}). It is also worth noting that this $N^\Lambda$ is smaller than that in the case of classical contingency logic (Def.~\ref{def.cm-no-m}), thus we cannot address all neighborhood contingency logics in a unified way.\footnote{In contrast, the canonical neighborhood function used in the completeness proof of classical modal logic is the smallest neighborhood function among canonical neighborhood functions used in the completeness proofs of all neighborhood modal logics. Cf. e.g.~\cite{Chellas:1980}.} This indicates that the completeness proofs of these logics are nontrivial. Besides, $N^\Lambda$ seems not workable for proper extensions of ${\bf K^\Delta}$, which we leave for future work.

\section{Acknowledgements}

This research is supported by the project 17CZX053 of National Social Science Fundation of China. We would like to thank Lloyd Humberstone and two anonymous referees for careful reading of earlier versions and making insightful comments, which help improve the paper substantially.

\bibliographystyle{plain}
\bibliography{biblio2017,biblio2016}

\section*{Appendix}

Recall that $\Delta$M stands for $\Delta\phi\to\Delta(\phi\vee\psi)\vee\Delta(\neg\phi\vee\chi)$, and $\Delta\text{C}$ stands for $\Delta\phi\land\Delta\psi\to\Delta(\phi\land\psi)$. And in the Conclusion section (Sec.~\ref{sec.concl}), we claim that ``in ${\bf M^\Delta}$, $\Delta$M can be replaced by $\Delta\phi\to\Delta(\phi\to\psi)\vee \Delta(\neg\phi\to\chi)$ (denoted by $\Delta\text{M}'$), and in ${\bf R^\Delta}$, $\Delta$C can be replaced by $\Delta(\psi\to\phi)\land\Delta(\neg\psi\to\phi)\to\Delta\phi$ (denoted by $\Delta\text{C}'$)''. In this appendix, we verify this claim, that is,
$$(1)~~~~~~{\bf E^\Delta}+\Delta\text{M}={\bf E^\Delta}+\Delta\text{M}',$$%in the system ${\bf M^\Delta}-\Delta$M,
%$$(1)~~~\vdash \Delta\phi\to\Delta(\phi\vee\psi)\vee\Delta(\neg\phi\vee\chi)\text{ iff }\vdash \Delta\phi\to \Delta(\phi\to\psi)\vee\Delta(\neg\phi\to\chi);$$
and
$$(2)~~~~~~{\bf M^\Delta}+\Delta\text{C}={\bf M^\Delta}+\Delta\text{C}'.$$%in the system ${\bf R^\Delta}-\Delta$C,
%$$(2)~~~\vdash \Delta\phi\land\Delta\psi\to\Delta(\phi\land\psi)\text{ iff }\vdash\Delta(\psi\to\phi)\land\Delta(\neg\psi\to\phi)\to\Delta\phi.$$

It is straightforward to show $(1)$, by using only $\text{TAUT}$, $\Delta\text{Equ}$, $\text{MP}$ and $\text{RE}\Delta$. For $(2)$, the `$\supseteq$' part, that is, $\Delta\text{C}'$ is provable in ${\bf M^\Delta}+\Delta\text{C}$, is easy, shown as follows.
\[
\begin{array}{lll}
(i)& \Delta(\psi\to\phi)\land\Delta(\neg\psi\to\phi)\to\Delta((\psi\to\phi)\land(\neg\psi\to\phi))& \Delta\text{C}\\
(ii)& (\psi\to\phi)\land(\neg\psi\to\phi)\lra\phi&\text{TAUT}\\
(iii)&\Delta((\psi\to\phi)\land(\neg\psi\to\phi))\lra\Delta\phi&(ii),\text{RE}\Delta\\
(iv)&\Delta(\psi\to\phi)\land\Delta(\neg\psi\to\phi)\to\Delta\phi&(i),(iii),\text{MP}\\
\end{array}
\]

The `$\subseteq$' part, that is, $\Delta\text{C}$ is provable in ${\bf M^\Delta}+\Delta\text{C}'$, is harder.
\[
\begin{array}{lll}
(i)&\Delta\phi\to\Delta(\phi\vee(\phi\land\psi))\lor\Delta(\neg\phi\vee\neg\psi)&\Delta\text{M}\\
(ii)&\Delta\psi\to\Delta(\psi\vee\neg\phi)\vee \Delta(\neg\psi\vee\neg\phi)&\Delta\text{M}\\
(iii)&\Delta(\neg\phi\vee\neg\psi)\lra\Delta(\neg\psi\vee\neg\phi)&\text{TAUT},\text{RE}\Delta\\
(iv)&\Delta\phi\land\Delta\psi\to\Delta(\neg\phi\vee\neg\psi)\vee(\Delta(\phi\vee(\phi\land\psi))\land\Delta(\psi\vee\neg\phi))&(i)-(iii)\\
(v)&\Delta(\psi\vee\neg\phi)\lra\Delta(\neg\phi\vee(\phi\land\psi))&\text{TAUT},\text{RE}\Delta\\
(vi)&\Delta\phi\land\Delta\psi\to\Delta(\neg\phi\vee\neg\psi)\vee(\Delta(\phi\vee(\phi\land\psi))\land\Delta(\neg\phi\vee(\phi\land\psi))&(iv),(v)\\
(vii)&\Delta(\phi\vee(\phi\land\psi))\lra\Delta(\neg\phi\to(\phi\land\psi))&\text{TAUT},\text{RE}\Delta\\
(viii)&\Delta(\neg\phi\vee(\phi\land\psi)\lra\Delta(\phi\to(\phi\land\psi))&\text{TAUT},\text{RE}\Delta\\
(ix)&\Delta(\neg\phi\vee\neg\psi)\lra\Delta(\phi\land\psi)&\Delta\text{Equ}\\
(x)&\Delta\phi\land\Delta\psi\to\Delta(\phi\land\psi)\vee(\Delta(\neg\phi\to(\phi\land\psi))\land \Delta(\phi\to(\phi\land\psi)))&(vi)-(ix)\\
(xi)&\Delta(\neg\phi\to(\phi\land\psi))\land \Delta(\phi\to(\phi\land\psi))\to \Delta(\phi\land\psi)&\Delta\text{C}'\\
(xii)&\Delta\phi\land\Delta\psi\to\Delta(\phi\land\psi)&(x),(xi)\\
\end{array}
\]

\weg{\section{About transitive and Euclidean contingency logics}

\begin{lemma}
$(N^c)^+$ possesses the property $(4)$ if $\Gamma$ contains also the axiom $\Delta4$, i.e. $\Delta\phi\to\Delta(\Delta\phi\vee\psi)$.
\end{lemma}

\begin{proof}
Suppose that $X\in (N^c)^+(s)$, to show that
$$\{u\in S^c\mid X\in (N^c)^+(u)\}\in (N^c)^+(s).$$
Let $Z=\{u\in S^c\mid X\in (N^c)^+(u)\}$. We need to find a $U\in N^c(s)$ such that that $U\subseteq Z$.

By supposition, $Y\subseteq X$ for some $Y\in N^c(s)$. This entails that there exists $\phi$ such that $|\phi|=Y\in N^c(s)$, and then $\Delta(\phi\vee\psi)\in s$ for every $\psi$. Due to axiom $\Delta4$, $\Delta(\Delta(\phi\vee\psi)\vee\chi)\in s$ for every $\psi$ and $\chi$. In what follows, we shall prove that $|\Delta(\phi\vee\psi) \text{ for every }\psi|=\{u\in S^c\mid \Delta(\phi\vee\psi)\in u \text{ for every }\psi\}$ is the desired $U$.

First, since we have obtained that $\Delta(\Delta(\phi\vee\psi)\vee\chi)\in s$ for every $\psi$ and $\chi$, thus $|\Delta(\phi\vee\psi) \text{ for every }\psi|\in N^c(s)$.

Second, for any $u\in S^c$, if $\Delta(\phi\vee\psi)\in u \text{ for every }\psi$, then $|\phi|\in N^c(u)$. We have also shown $|\phi|\subseteq X$. Thus $X\in (N^c)^+(u)$. Therefore, $|\Delta(\phi\vee\psi) \text{ for every }\psi|\subseteq Z$, as required.
\end{proof}

\begin{theorem}
$\mathbf{K4^\Delta}$ is sound and strongly complete with respect to the class of $(4)$-frames.
\end{theorem}

\begin{lemma}
$(N^c)^+$ possesses the property $(5)$ if $\Gamma$ contains also the axiom $\Delta5$, i.e. $\neg\Delta\phi\to\Delta(\neg\Delta\phi\vee\psi)$.
\end{lemma}

\begin{theorem}
$\mathbf{K5^\Delta}$ is sound and strongly complete with respect to the class of $(5)$-frames.
\end{theorem}

For each Kripke model $\M^K=\lr{S,R,V}$, define $\M^N=\lr{S,N,V}$ such that $N(s)=\{X\subseteq S\mid R(s)\subseteq X\}$.
\begin{proposition}\
\begin{enumerate}
\item If $R$ is serial, then $N$ satisfies $(d)$.
\item If $R$ is reflexive, then $N$ satisfies $(t)$.
\item If $R$ is symmetric, then $N$ satisfies $(b)$.
\item If $R$ is transitive, then $N$ satisfies $(4)$.
\item If $R$ is Euclidean, then $N$ satisfies $(5)$.
\end{enumerate}
\end{proposition}

\begin{proof}
\begin{enumerate}
\item Suppose, for a contradiction, that $R$ is serial, but $N$ does not satisfy $(d)$. Then there exist $s\in S$ and $X\subseteq S$ such that $X\in N(s)$ and $S\backslash X\in N(s)$. By definition of $N$, $R(s)\subseteq X$ and $R(s)\subseteq S\backslash X$, which implies $R(s)=\emptyset$, contradicting the supposition that $R$ is serial.
\item Suppose that $R$ is reflexive, to show that $N$ satisfies $(t)$. For this, assume that $X\in N(s)$, thus $R(s)\subseteq X$. By supposition, $s\in R(s)$, then $s\in X$, as desired.
\item Suppose towards a contradiction that $R$ is symmetric, but $N$ does satisfy $(b)$. Then $s\in X$ but $\{u\in S\mid S\backslash X\notin N(u)\}\notin N(s)$ for some $s$ and $X$. By definition of $N$, $R(s)\not\subseteq \{u\in S\mid S\backslash X\notin N(u)\}$. Then there is a $t$ such that $sRt$ and $S\backslash X\in N(t)$. Using definition of $N$ again, we obtain $R(t)\subseteq S\backslash X$. From $sRt$ and the symmetry of $R$, we have $tRs$, and then $s\in S\backslash X$, contradicting the fact that $s\in X$.
\item Suppose, for a contradiction, that $R$ is transitive, but $N$ does not satisfy $(4)$. Then $X\in N(s)$ but $\{u\in S\mid X\in N(u)\}\notin N(s)$ for some $s$ and $X$. By definition of $N$, $R(s)\subseteq X$ and $R(s)\not\subseteq \{u\in S\mid X\in N(u)\}$, and hence for some $t$ it holds that $sRt$ and $X\notin N(t)$. Using the definition of $N$ again, we infer $R(t)\not\subseteq X$. This means that $tRu$ and $u\notin X$ for some $u$. Now due to the transitivity of $R$, it follows that $sRu$. This contradicts the fact that $R(s)\subseteq X$.
\item Suppose, for a contradiction, that $R$ is Euclidean, but $N$ does not satisfy $(5)$. Then $X\notin N(s)$ but $\{u\in S\mid X\notin N(u)\}\notin N(s)$. By definition of $N$, this means that $R(s)\not\subseteq X$ and $R(s)\not\subseteq \{u\in S\mid X\notin N(u)\}$. Then $sRt$ and $t\notin X$ for some $t$, and $sRu$ and $X\in N(u)$ for some $u$. From $X\in N(u)$ and again the definition of $N$, it follows that $R(u)\subseteq X$. By $sRu$ and $sRt$ and the Eucludicity, $uRt$. Then $t\in X$: a contradiction.
\end{enumerate}
\end{proof}}

\weg{\[\xymatrix{
  & \boxed{{\bf R^\Delta}={\bf EMC^\Delta}} \ar[rr]
      &  & \boxed{{\bf K^\Delta}={\bf EMCN^\Delta}}        \\
  \boxed{\bf M^\Delta} \ar[ur]\ar[rr]
      &  & \boxed{{\bf EMN^\Delta}} \ar[ur] \\
  & \boxed{{\bf EC^\Delta}}\ar[uu]\ar[rr]
      &  & \boxed{{\bf ECN^\Delta}}   \ar[uu]            \\
   \boxed{{\bf E^\Delta}={\bf EZ^\Delta}} \ar[rr]\ar[ur]\ar[uu]
     &  & \boxed{{\bf EN^\Delta}={\bf ENZ^\Delta}} \ar[ur] \ar[uu]       }\]}

\end{document}